\def\@cite#1#2{{\m@th\upshape\bfseries%
[{#1\if@tempswa{\m@th\upshape\mdseries, #2}\fi}]}}
\theoremstyle{plain}
\newtheorem{thm}{Theorem}[section]
\newtheorem{cor}[thm]{Corollary}
\newtheorem{prop}[thm]{Proposition}
\newtheorem{lem}[thm]{Lemma}
\theoremstyle{definition}
\newtheorem{rem}[thm]{Remark}
\newtheorem{defn}[thm]{Definition}
\newtheorem{eg}[thm]{Example}
\newcommand{\bB}{{\mathbb{B}}}
\newcommand{\bC}{{\mathbb{C}}}
\newcommand{\bD}{{\mathbb{D}}}
\newcommand{\bT}{{\mathbb{T}}}
\newcommand{\bZ}{{\mathbb{Z}}}
  \newcommand{\A}{{\mathcal{A}}}
  \newcommand{\B}{{\mathcal{B}}}
  \newcommand{\E}{{\mathcal{E}}}
\renewcommand{\H}{{\mathcal{H}}}
  \newcommand{\K}{{\mathcal{K}}}
  \newcommand{\M}{{\mathcal{M}}}
\renewcommand{\O}{{\mathcal{O}}}
  \newcommand{\U}{{\mathcal{U}}}
\renewcommand{\phi}{\varphi}
\newcommand{\upchi}{{\raise.35ex\hbox{\ensuremath{\chi}}}}
\newcommand{\fA}{{\mathfrak{A}}}
\newcommand{\fB}{{\mathfrak{B}}}
\newcommand{\fJ}{{\mathfrak{J}}}
\newcommand{\fK}{{\mathfrak{K}}}
\newcommand{\fL}{{\mathfrak{L}}}
\newcommand{\fR}{{\mathfrak{R}}}
\newcommand{\fs}{{\mathfrak{s}}}
\newcommand{\fu}{{\mathfrak{u}}}
\newcommand{\bx}{{\mathbf{x}}}
\newcommand{\rC}{{\mathrm{C}}}
\newcommand{\qand}{\quad\text{and}\quad}
\newcommand{\qif}{\quad\text{if}\quad}
\newcommand{\qfor}{\quad\text{for}\quad}
\newcommand{\qforal}{\quad\text{for all}\quad}
\newcommand{\AND}{\text{ and }}
\newcommand{\FOR}{\text{ for }}
\newcommand{\ad}{\operatorname{ad}}
\newcommand{\Aut}{\operatorname{Aut}}
\newcommand{\id}{{\operatorname{id}}}
\newcommand{\ran}{\operatorname{Ran}}
\newcommand{\spn}{\operatorname{span}}
\DeclareMathOperator*{\sotlim}{\textsc{sot}--lim}
\newcommand{\ca}{\mathrm{C}^*}
\newcommand{\cenv}{\mathrm{C}^*_{\text{env}}}
\newcommand{\Fn}{\mathbb{F}_n^+}
\newcommand{\Fock}{{\ell^2(\Fn)}}
\newcommand{\lip}{\langle}
\newcommand{\rip}{\rangle}
\newcommand{\ip}[1]{\langle #1 \rangle}
\newcommand{\mt}{\varnothing}
\newcommand{\ol}{\overline}
\newcommand{\sot}{\textsc{sot}}
\newcommand{\wot}{\textsc{wot}}
\newcommand{\ltwo}{\ell^2}
\newcommand{\AD}{A(\bD)}
\begin{document}

\title[Dilations]{Dilating covariant representations of the non-commutative disc algebras}

\author[K.R.Davidson]{Kenneth R. Davidson}
\address{Pure Math.\ Dept.\\U. Waterloo\\Waterloo, ON\;
N2L--3G1\\CANADA}
\email{krdavids@uwaterloo.ca}

\author[E.G.Katsoulis]{Elias~G.~Katsoulis}
\address{Dept. Math.\\ East Carolina University\\
Greenville, NC 27858\\USA}
\email{KatsoulisE@mail.ecu.edu}

\begin{abstract}
Let $\phi$ be an isometric automorphism of the non-comm\-ut\-ative
disc algebra $\fA_n$ for $n \geq 2$.
We show that every contractive covariant representation of $(\fA_n, \phi)$ dilates to a
unitary covariant representation of $(\O_n, \phi)$.
Hence the C*-envelope of the semicrossed product
$\fA_n \times_{\phi} \bZ^+$ is  $\O_n \times_{\phi} \bZ$.
\end{abstract}

\subjclass[2000] {47L55, 47L40, 46L05, 37B20, 37B99.}
\keywords{non-commutative disk algebra, dilation, semicrossed product, Cuntz algebra, crossed product}
\thanks{First author partially supported by an NSERC grant.}
\thanks{Second author was partially supported by a grant from ECU}

\date{}
\maketitle

\section{Introduction}\label{S:intro}

In this paper, we continue our study of the crossed product $\fA_n \times_{\phi} \bZ^+$
of the non-commutative disk algebra $\fA_n$ by an isometric automorphism $\phi$.
These semicrossed products were introduced in \cite{DKn}
as universal algebras for the contractive covariant representations of $(\fA_n, \phi)$,
where we showed there that the isomorphic class of $\fA_n \times_{\phi} \bZ^+$
is determined by the analytic conjugacy class of $\phi$.
Recall that the isometric automorphisms of $\fA_n$ come from
the natural action of the group $\Aut(\bB_n)$ of conformal automorphisms of the
unit ball $\bB_n$ of $\bC^n$ on the character space $\ol{\bB}_n$ of $\fA_n$.

The universality of $\fA_n \times_{\phi} \bZ^+$ allows for a rich representation theory and
this was a key component for classifying these algebras.
On the other hand, it is the universality of $\fA_n \times_{\phi} \bZ^+$ that raises the
problem of finding concrete faithful representations.
This is the main theme of this paper.
As we shall see, the C*-envelope of $\fA_n \times_{\phi} \bZ^+$ is
$\O_n \times_{\phi} \bZ$, where $\O_n$ denotes the Cuntz algebra, with generators
going to generators.
Using the theory of gauge invariant uniqueness for C*-crossed product C*-algebras,
we obtain a concrete faithful representation for $\fA_n \times_{\phi} \bZ^+$.

The proof of this fact relies on a dilation theorem.  We show first that every
completely contractive representation of $\fA_n \times_{\phi} \bZ^+$ dilates
to a unitary system in which the image of the row of generators
$[L_1\ \dots\ L_n]$ of $\fA_n$ is a row isometry
and the intertwining operator implementing the automorphism is unitary.
This is not sufficient for our purposes, because the C*-algebra generated by a
row isometry is either $\O_n$ or the Cuntz-Toeplitz algebra $\E_n$.
We need to further dilate the latter representations to unitary dilations
of Cuntz type.  We thereby show that these are the only maximal representations
of the semicrossed product.  So using the Dritschel--McCullough approach \cite{DMc}
to the C*-envelope, we are able to obtain the desired conclusion.

Using a result of Kishimoto \cite{Kish}, we show that in the case
where $\phi$ is aperiodic, the C*-algebra
$\O_n \times_{\phi} \bZ$ is simple.
In this case, the quotient $\ca(\E_n, U_\phi)/\fK$, where $U_\phi$ is the Voiculescu
unitary implementing $\phi$ on $\E_n$, is $*$-isomorphic to $\O_n \times_{\phi} \bZ$.

There is an extensive body of work studying dynamical systems via an associated
operator algebras going back to work of von Neumann.  The use of nonself-adjoint
operator algebras in this area begins with seminal work of Arveson \cite{Arv} and
Arveson--Josephson \cite{ArvJ}.  This was put into the abstract setting of
semi-crossed products by Peters \cite{Pet}.
See \cite{DKsurvey} for an overview of some of the recent work in this area.

There is also a large literature on dilation theory for various
nonself-adjoint operator algebras going back to seminal work of Sz.Nagy.
Arveson \cite{Arv1} established dilation theory as an essential tool for studying
nonself-adjoint operator algebras.
Work on abstract semicrossed products began with work of Peters \cite{Pet}.
He concentrated on the action of an endomorphism on a C*-algebra,
and here the theory works well.  Specifically one can define a
family of natural orbit representations and show that these produce a
faithful (completely isometric) representation of the semicrossed product.
This can be used to find explicit information about the C*-envelope.
See Peters \cite{Pet3} for the one variable case and \cite{DavR} for
the C*-envelope a multivariable dynamical system.

Muhly and Solel developed an extensive theory of certain
nonself-adjoint operator algebras called tensor algebras of
a C*-correspondences \cite{MS1,MS2,MS3}.
They showed,  under certain hypotheses, that the C*-envelope of the
tensor algebra is the Cuntz--Pimsner C*-algebra built from the correspondence.
This result was extended by Fowler, Muhly and Raeburn \cite{FMR}
to the case when the left action is faithful and strict.  Finally
the second author and Kribs \cite{KK} removed those restrictions.

The semicrossed product of $\fA_n$ has a lot in common with these
tensor algebras.  However, since the semicrossed product is defined as
the universal operator algebra for a family of covariant representations,
one needs to prove a dilation theorem in order to decide whether or not this algebra
sits inside a Cuntz--Pimsner algebra completely isometrically.
This is, in fact, where all of the difficulty lies.

It is perhaps worth mentioning that classical counterexamples in dilation
theory point to the difficulties that might arise in general.  The bidisk
algebra $A(\bD^2)$ sits inside $\rC(\bT^2)$, which is its C*-envelope
by Ando's theorem.  Consider the identity automorphism $\id$.
Ando's theorem also shows that the completely contractive representations
of $A(\bD^2)$ are determined by an arbitrary pair $T_1,T_2$ of
commuting contractions.  A covariant representation of $(A(\bD^2),\id)$
is given by such a pair and a third contraction $T_3$ which commutes
with $T_1$ and $T_2$.  If it were true that the C*-envelope of this system
was $\rC(\bT^2) \times_\id \bZ \simeq \rC(\bT^3)$, then it would be true
that every commuting triple of contractions has a unitary dilation.
This was disproven in a famous paper by Varopolous.
See Paulsen's book \cite[Chapter 5]{Pau} for a treatment of these topics.
Thus when such unitary dilations are possible, we must see this as
an important but special phenomenon.
 
\section{Preliminaries} \label{S:prel}

Consider the left regular representation $\lambda$ of the
free semigroup $\Fn$ acting on Fock space, $\Fock$.
Let $L_i = \lambda(i)$.
The non-commutative disc algebra $\fA_n$, for $n \ge 2$,  is the nonself-adjoint
unital operator algebra generated by $L_1,\dots,L_n$.
It sits as a subalgebra of the Cuntz--Toeplitz C*-algebra $\E_n = \ca(\{L_1,\dots,L_n\})$.
However the quotient map onto the Cuntz algebra $\O_n$ is completely isometric on $\fA_n$.
So $\fA_n$ may be considered as the subalgebra of $\O_n$ generated by the
standard generators $\fs_1,\dots,\fs_n$.
Moreover the operator algebra generated by any $n$-tuple of isometries
$S_1,\dots,S_n$ with pairwise orthogonal ranges is completely isometrically
isomorphic to $\fA_n$.
These algebras were introduced by Popesu \cite{Pop2} as a natural multivariable
generalization of disc algebra $A(\bD)$.
The Frahzo--Bunce dilation Theorem \cite{Fr,Bun,Pop1} shows that any row contractive $n$-tuple
$T=[T_1, \dots, T_n]$ dilates to an $n$-tuple of isometries $S = [S_1, \dots, S_n]$
with pairwise orthogonal range. Hence given any such $n$-tuple $T$, there is a unique
completely contractive homomorphism of $\fA_n$ onto the
algebra $\A(T_1,\dots,T_n)$ taking generators to generators.
Popescu \cite{Pop2} used this to establish a natural analogue of the von Neumann inequality
for row contractive $n$-tuples.

If $\phi$ is an automorphism for an operator algebra $\A$, then a contractive
(resp.\ isometric or unitary) covariant representation for $(\A, \phi)$ consists of
a completely contractive (resp.\ completely isometric) representation
$\pi: \A \rightarrow B(\H)$ and a contraction (resp.\ isometry or unitary) $ U \in B(\H)$ so that
$\pi(A)U= U \pi(\phi(A))$.
If $\A$ happens to be a $\ca$-algebra, then completely contractive maps are
$*$-homomorphisms.

Each element $A \in \fA_n$ determines a function $\hat A$ on the character space,
and this is a bounded holomorphic function on $\bB_n$ which extends to a
continuous function on $\ol{\bB}_n$.
An automorphism $\phi$ of $\fA_n$ induce an automorphism $\hat\phi$ of the character space
$\bB_n$ via $\hat A(\hat \phi (z)) = \widehat{\phi(A)}(z)$.
The map $\hat\phi$ is biholomorphic, and thus is a conformal automorphism \cite{DP1}.
These maps are given by fractional linear transformations (c.f.  Rudin \cite{Rud}).

Each of these conformal maps is induced by a unitarily implemented automorphism
of $\E_n$ which fixes the subalgebra $\fA_n$.  Indeed, Voiculescu \cite{V} constructs
a unitary representation of the Lie group $U(n,1)$ which contains the scalar unitaries, and
$U(n,1)/\bT \simeq \Aut(\bB_n)$, such that $\ad U$ implements the corresponding
automorphism.  In \cite{DP2}, the first author and Pitts study the
automorphism of the weak operator closed algebra $\fL_n = \ol{\fA}_n^{\wot}$.
The case of $\fA_n$ is similar but more elementary.
See Popescu \cite{Pop_auto} for another proof.

\begin{defn}   \label{def:sem}
Let $\Aut(\fA_n)$ denote the group of completely isometric automorphisms
of $\fA_n$, and let $\phi \in \Aut(\fA_n)$.
A \textit{covariant representation} $(\pi, K)$ of $(\fA_n,\phi)$
is a completely contractive representation
$\pi$ of $\fA_n$ on a Hilbert space $\H$ and a contraction $K \in \B(\H)$ so that
\[ \pi(A) K = K \pi(\phi(A)) \qforal A \in \fA_n .\]
The \textit{semicrossed product} $\fA_n \times_\phi \bZ^+$ is the
universal operator algebra generated by a copy of $\fA_n$ and a contraction $\fu$
so that $A\fu = \fu \phi(A)$ for all $A \in \fA_n$.
\end{defn}

In other words, $\fA_n \times_\phi \bZ^+$ is the operator algebra generated by a
(completely isometric) copy of $\fA_n$ and a contraction $\fu$ with the property
that every covariant representation of $(\fA_n,\phi)$ induces
a completely contractive representation $\pi \times K$ of
$\fA_n \times_\phi \bZ^+$ on $\H$, which on polynomials is defined as
\[
 (\pi \times K) \big( \sum \fu^n A_n \big) = \sum K^n \pi(A_n).
\]
The norm may be defined by as the supremum over all covariant representations:
\[
  \big\| \sum \fu^n A_n \big\| = \sup_{(\pi,K)} \big\| (\pi \times K)\big(\sum \fu^n A_n \big) \big\| .
\]

A completely contractive representation of $\fA_n$ sends the generators
$L = [L_1\ \dots\ L_n]$ to a row contraction $A = [A_1\ \dots \ A_n]$.
Conversely, the Frahzo--Bunce dilation theorem \cite{Fr,Bun} shows that
any row contraction dilates to a row isometry.  Thus by Popescu's von Neumann inequality
\cite{Pop2}, there is a completely contractive representation $\pi$ of $\fA_n$
with $\pi(L_i)=A_i$ for $1 \le i \le n$.
If we take $\pi=\id_{\fA_n}$ and $U=0$, we see that the imbedding of $\fA_n$ into
$\fA_n \times_\phi \bZ^+$ is completely isometric.

In \cite{DKn}, we identified several such representations which are worth repeating.

\begin{eg} \label{basiceg}
For any $\phi\in\Aut(\fA_n)$, consider the unitary $U_\phi$ constructed by
Voiculescu \cite{V} on the Fock space $\Fock$ which implements the action of
$\phi$ on the Cuntz--Toeplitz $\ca$-algebra $\E_n$ by $U_\phi^*AU_\phi = \phi(A)$,
and fixes $\fA_n$.
For any $\lambda \in \bT$, this provides a covariant pair $(\id_{\E_n}, \lambda U_\phi)$
for $(\E_n, \phi)$.
Therefore it produces a representation $\id_{\E_n}\times\lambda U_\phi$ of $\E_n\times_\phi \bZ$.
Since $\fA_n$ is invariant for $\ad U_\phi$, this provides a covariant representation
$(\id, \lambda U_\phi)$ of $(\fA_n , \phi)$ by restriction.
This yields a representation $\id \times \lambda \hat{U}_\phi$ of $\fA_n \times_\phi \bZ^+$ which is
completely isometric on $\fA_n$ and $\lambda U_\phi$ is unitary.

Similarly, by taking a quotient by $\K(\Fock)$, the compact operators on $\Fock$, we obtain a covariant
representation $(\pi_{\O_n}, \lambda \hat{U}_\phi)$ for $(\O_n, \phi)$ and therefore representations for both
$\O_n \times_\phi \bZ$ and $\fA_n \times_\phi \bZ^+$, inside
the Calkin algebra, which we denote as $\pi_{\O_n}\times \lambda \hat{U}_\phi$.
\end{eg}

\begin{eg}
Let $\pi$ be any completely contractive representation of $\fA_n$ on a Hilbert space $\H$.
Define $\tilde\pi$ on $\H \otimes \ltwo$ by
\[ \tilde\pi(a) = \sum\strut_{k\ge0}^\oplus \pi\phi^k(a) \qand U = I_\H \otimes S \]
where $S$ is the unilateral shift.  This is easily seen to yield a completely contractive
representation $\tilde\pi$ of $\fA_n$ and a contraction $U$ so that
$\tilde\pi \times_\phi U$ yields a representation of $\fA_n \times_\phi \bZ^+$.

These are called \textit{orbit representations}.
When $\A$ is a C*-algebra, Peters \cite{Pet} showed that the direct sum of all
orbit representations $\tilde\pi\times_\phi U$, as $\pi$ runs over the $*$-representations
of $\A$, yields a completely isometric representation of $\A \times_\phi \bZ^+$.
For general operator algebras, this is not the case.
\end{eg}

\begin{eg} Let $\phi \in \Aut \fA_n$.
Consider the non-commutative disc algebra $\fA_{n+1}$ acting on the Fock space
$\ell^2 (\mathbb{F}_{n+1}^{+})$ and define an ideal
\[ \fJ = \big\lip L_iL_{n+1} -L_{n+1}\phi(L_i) : 1 \le i \le n \big\rip .\]
The \wot-closure $\ol{\fJ}$ of $\fJ$ is an ideal of $\fL_{n+1}$, and these ideals were studied
in \cite{DP2,DP3}.  In particular, it is shown in \cite{DP2} that $\ol{\fJ}$ is determined by
its range, which is a subspace invariant for both $\fL_{n+1}$ and its commutant $\fR_{n+1}$.
Then in \cite{DP3}, it is shown that  $\fL_{n+1}/\ol{\fJ}$ is completely isometrically
isomorphic to the compression to $\M_\phi = \ran(\fJ)^\perp$.  Since
\[
  \ol{\ran{\fJ}} = \spn \big\{  A(L_iL_{n+1} -L_{n+1}\phi(L_i))\ell^2 (\mathbb{F}_{n+1}^{+}):  i=1, 2, \dots, n \big\}
\]
is evidently orthogonal to $\xi_\mt$, we see that $\M_\phi$ is non-empty.
The compression of $\fA_n$ to $ \M_{\phi}$ is a completely contractive homomorphism $\rho$,
and the compression $B$ of $L_{n+1}$ is also a contraction.  Therefore
$(\rho, B)$ is a covariant representation of $(\fA_n,\phi)$, and thus
determines a completely contractive representation $\fA_n \times_\phi \bZ^+$.
\end{eg}

\begin{eg} Any representation of $\fA_n$ produces a
representation of $\fA_n \times_\phi \bZ^+$
by simply taking $U=0$.
In \cite{DKn}, we contructed various finite dimensional
representations of $\fA_n \times_\phi \bZ^+$ which allowed us to
classify them as algebras.
\end{eg}

\section{Unitary Covariant Representations} \label{S:dil}

The purpose of this section is to show that contractive representation of our
covariant system always dilate to a unitary covariant system.
The proof requires a number of known dilation theorems.

We call a representation $\pi$ of an algebra $\A$
on a Hilbert space $\K \supset \H$ an \textit{extension} of a
representation $\sigma$ of $\A$ on $\H$ if $\H$ is invariant for
$\pi(\A)$ and $\pi(A)|_\H = \sigma(A)$ for $A \in \A$; i.e., $\pi(A) \simeq
\begin{bmatrix} \sigma(A) & * \\ 0 & * \end{bmatrix}$.
Likewise, $\pi$ is a \textit{co-extension} of $\sigma$ if $\H$ is
co-invariant for $\pi(\A)$ and $P_\H \pi(A)|_\H = \sigma(A)$
for $A \in \A$; i.e., $\pi(A) \simeq
\begin{bmatrix} * & * \\ 0 & \sigma(A) \end{bmatrix}$.
Finally, we say that $\pi$ is a \textit{dilation} of $\sigma$ if
$P_\H \pi(A)|_\H = \sigma(A)$  for $A \in \A$. By a result of
Sarason \cite{Sar}, $\H$ is semi-invariant and so $\pi(A) \simeq
\begin{bmatrix}*&*&*\\ 0& \sigma(A) & * \\ 0 & 0 & * \end{bmatrix}$.

The main result of this section will be established by a sequence of lemmas.

\begin{thm}    \label{main1}
Let $\phi$ be an isometric
automorphism of the non-comm\-ut\-ative disc algebra $\fA_n$, $n \geq 2$.
Then, any contractive covariant representation of $(\fA_n, \phi)$ dilates to
a unitary covariant
representation of $(\E_n, \phi)$, where $\E_n$ denotes the Cuntz-Toeplitz $\ca$-algebra.
\end{thm}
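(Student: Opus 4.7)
The plan is to build the desired unitary covariant representation of $(\E_n,\phi)$ from the given contractive covariant pair $(\pi,K)$ in two main stages, corresponding to the two distinct pieces of data we must dilate: the representation of $\fA_n$ and the contraction $K$. At each stage I would have to check that the dilation can be arranged so as not to break the covariance relation $\pi(A)K=K\pi(\phi(A))$.

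\emph{Stage 1: Dilate $\pi$ to a $*$-representation of $\E_n$, preserving covariance.} The row contraction $[\pi(L_1),\ldots,\pi(L_n)]$ admits a Frahzo--Bunce--Popescu isometric dilation to a row of isometries with pairwise orthogonal ranges on some $\H_1\supset\H$, which extends uniquely to a $*$-representation $\pi_1$ of $\E_n$. The subtlety is that one must simultaneously produce a contraction $K_1$ on $\H_1$, dilating $K$, that satisfies $\pi_1(A)K_1=K_1\pi_1(\phi(A))$. The cleanest way to arrange this is to regard $K$ as an intertwiner between the two completely contractive representations $\pi$ and $\pi\circ\phi$ of $\fA_n$, and then invoke a Popescu-style intertwining lifting theorem for row contractions: when $\pi$ and $\pi\circ\phi$ are simultaneously dilated to isometric representations, the intertwiner $K$ lifts to a contractive intertwiner $K_1$. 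The outcome is an isometric covariant representation $(\pi_1,K_1)$ dilating the original pair.

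\emph{Stage 2: Dilate $K_1$ to a unitary, preserving covariance.} Starting from $(\pi_1,K_1)$, I would first dilate $K_1$ to an isometry $V$ on $\H_2\supset\H_1$ by the Sz.-Nagy construction, then dilate $V$ to a unitary $U$ on $\H_3\supset\H_2$. At each step the representation must be extended to the enlarged Hilbert space so that it remains a $*$-representation of $\E_n$ and the covariance relation is preserved. Here the fact that $\phi$ is an \emph{automorphism} (rather than merely an endomorphism) is essential: one extends $\pi_1$ onto the extra defect summands by composing with powers of $\phi^{\pm1}$, using the Voiculescu unitary $U_\phi$ to witness that $\phi$ extends to a $*$-automorphism of $\E_n$. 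A convenient model is a two-sided orbit picture, in which the summands of $\H_2\ominus\H_1$ and $\H_3\ominus\H_2$ carry copies of $\pi_1$ twisted by positive and negative powers of $\phi$ respectively.

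\emph{Main obstacle.} The core difficulty is the compatibility of the two dilations: the Frahzo--Bunce step alone does not in general respect the intertwining by $K$, and the Sz.-Nagy dilation of $K$ alone does not respect the isometric character of the row. Each individual dilation is classical; what makes the simultaneous dilation work is the invertibility of $\phi$, which supplies $\phi^{-1}$ for the ``backward'' extension needed to upgrade from an isometric to a unitary dilation. Without invertibility only an isometric covariant dilation would survive. Note that the theorem stops at $\E_n$ rather than the Cuntz algebra $\O_n$; the further passage to Cuntz-type dilations, which is required for the identification of the C*-envelope, will be addressed in the next theorem of the paper.
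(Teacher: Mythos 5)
Your Stage 1 is essentially the paper's first move: the Frazho--Popescu commutant lifting theorem, applied after observing that the minimal isometric dilation of $\pi\circ\phi$ is $\phi$ applied to the minimal isometric dilation of $\pi$, produces a row isometry together with an intertwining contraction $K_1$ (this is the opening step of the paper's Lemma~\ref{nextstep}). The genuine gap is in your Stage 2, and it sits exactly where the paper's intro says ``all of the difficulty lies.'' Form the Sz.-Nagy--Schaeffer isometric dilation $V$ of $K_1$ and write the extended representation as $\pi_2 = \diag(\pi_1,\rho_1,\rho_2,\dots)$ along the defect summands. Comparing $(2,1)$-entries of $\pi_2(A)V$ and $V\pi_2(\phi(A))$ shows that covariance \emph{forces}
\[
\rho_1(A)\,D_{K_1} \;=\; D_{K_1}\,\pi_1(\phi(A)), \qquad D_{K_1}=(I-K_1^*K_1)^{1/2},\quad A\in\fA_n .
\]
Your prescription $\rho_1=\pi_1\circ\phi$ (``a copy of $\pi_1$ twisted by $\phi$'') satisfies this only if $D_{K_1}$ commutes with $\pi_1(\phi(\fA_n))$, and nothing in the covariance relation $\pi_1(A)K_1=K_1\pi_1(\phi(A))$ yields such commutation; it is false in general. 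The same objection applies to every summand of your ``two-sided orbit picture,'' so the central claim of Stage 2 --- that twisting by powers of $\phi^{\pm1}$ keeps the extension covariant and a $*$-representation --- is asserted but not true as stated.

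What must actually be produced is a row contraction $X=[X_1\ \dots\ X_n]$ acting on the defect space and satisfying $X_iD_{K_1}=D_{K_1}\pi_1(\phi(L_i))$; these $X_i$ are in general not unitarily equivalent to any $\pi_1\circ\phi^k$. The paper obtains them from Solel's dilation theorem (Lemma~\ref{fromSolel}) and then builds the covariant extension $\diag(V_A, X, \phi(X), \phi^2(X),\dots)$ against the Schaeffer dilation; since Solel's theorem returns only row \emph{contractions}, the row-isometric property is lost at this step, and the paper must run an alternating iteration (commutant lifting, then Solel, then commutant lifting, \dots) and pass to \textsc{sot} limits to make the row and $K$ isometric simultaneously (Lemma~\ref{nextstep}). (One can in fact avoid Solel here: using covariance and the fact that $\pi_1$ is a row isometry, one checks $\|D_{K_1}\pi_1(\phi(L_i))h\|=\|D_{K_1}h\|$ and that the spaces $D_{K_1}\pi_1(\phi(L_i))\H$ are pairwise orthogonal, so $X_iD_{K_1}h:=D_{K_1}\pi_1(\phi(L_i))h$ defines a row isometry on $\ol{\ran(D_{K_1})}$ --- but some such argument is indispensable, and your proposal contains none.) Your instinct that invertibility of $\phi$ drives the passage from isometric to unitary is correct and matches the paper's Lemma~\ref{finalstep}, but there too the mechanism is not a direct sum of $\phi^{-1}$-twisted copies of $\pi_1$: it is a one-step extension $\tilde S^{(i)}=U^*\phi^{-1}(S)^{(i)}U$, $\tilde V=U^*VU$ under the identification $U(x,y)=Vx+y$, arranged so that $\tilde V\tilde\H=\H$, and then iterated with a limit.
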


According to the commutant lifting Theorem of Frazho \cite{Fr2}
 and Popescu \cite{Pop1}, if $S,T$ are row contractions
and $K$ another contraction intertwining them,
 i.e.,  $S^{(i)}K=KT^{(i)}$ for all $1 \le i \le n$, then $K$ co-extends
to a contraction $K'$ that intertwines the minimal isometric dilations $V_S$ and $V_T$
of $S$ and $T$ respectively. A dual result can be obtained from a recent dilation of
Solel \cite{Sol1}. (See also \cite{DPY}.)
Solel's result says that if $S$ and $ T$ are as above, then
we can co-extend the contractions $S, T$ and $K$
to isometries $W_S , W_T$ and $W_K$, which still satisfy
$W_{S}^{(i)}W_K=W_KW_{T}^{(i)}$.
This leads to the following.

\begin{lem} \label{fromSolel}
Assume that $S=[S^{(1)}, \dots, S^{(n)}]$, $T=[T^{(1)}, \dots, T^{(n)}]$
are row contractions and $K$ is a contraction on $\H$ so that
\[
S^{(i)}K=KT^{(i)}, \quad 1\leq i\leq n.
\]
Let $V$ be an isometric dilation of $K$, acting on a Hilbert space $\H'$.
Then there exist row contractions $S'=[S'^{(1)}, \dots, S'^{(n)}]$, $T'=[T'^{(1)}, \dots, T'^{(n)}]$
on $\H'$, which co-extend $S$ and $T$ respectively,
and satisfy
\[
S'^{(i)}V =V T'^{(i)}, \quad 1\leq i\leq n.
\]
\end{lem}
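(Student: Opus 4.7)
The plan is to reduce to the case where $V$ is the minimal isometric co-extension of $K$, apply Solel's result to the triple $(S,T,K)$ on an auxiliary Hilbert space, and then transplant the conclusion to the given space $\H'$ by compressing to a reducing subspace and invoking the uniqueness of the minimal isometric co-extension.

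First I would verify that $\H_{\min} := \bigvee_{n \ge 0} V^n \H$ is \emph{reducing} for $V$. Invariance is immediate; coinvariance uses that $V$ is an isometry (so $V^* V^n \H = V^{n-1} \H$ for $n \ge 1$) together with $V^* \H \subseteq \H$, which holds because $V$ is an isometric co-extension of $K$. Consequently $\H' = \H_{\min} \oplus (\H' \ominus \H_{\min})$ and $V = V_{\min} \oplus V_0$, where $V_{\min}$ is the minimal isometric co-extension of $K$ on $\H_{\min}$ and $V_0$ is an isometry on the complement. If we produce row contraction co-extensions $S'_{\min}, T'_{\min}$ of $S, T$ on $\H_{\min}$ intertwined by $V_{\min}$, then extending by zero on $\H' \ominus \H_{\min}$ gives the desired $S', T'$ on $\H'$. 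So we may assume $V = V_{\min}$.

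Now apply Solel's theorem to $(S, T, K)$ to obtain isometric co-extensions $W_S, W_T, W_K$ on a Hilbert space $\tilde\K \supseteq \H$, with $W_S, W_T$ row isometries and $W_S^{(i)} W_K = W_K W_T^{(i)}$. The same argument as above shows that $\tilde\H_{\min} := \bigvee_{n \ge 0} W_K^n \H$ reduces $W_K$, so $\tilde V := W_K|_{\tilde\H_{\min}}$ is itself a minimal isometric co-extension of $K$. Let $P$ denote the projection of $\tilde\K$ onto $\tilde\H_{\min}$ and set
\[
\tilde S^{(i)} := P W_S^{(i)}|_{\tilde\H_{\min}}, \qquad \tilde T^{(i)} := P W_T^{(i)}|_{\tilde\H_{\min}}.
\]
As compressions of row isometries, these are row contractions; for $h \in \H$ one has $\tilde S^{(i)*} h = P W_S^{(i)*} h = P S^{(i)*} h = S^{(i)*} h$, and similarly for $\tilde T^{(i)}$, so $\tilde S^{(i)}$ and $\tilde T^{(i)}$ co-extend $S^{(i)}$ and $T^{(i)}$. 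Because $\tilde\H_{\min}$ reduces $W_K$, the projection $P$ commutes with $W_K$, and therefore
\[
\tilde S^{(i)} \tilde V = P W_S^{(i)} W_K|_{\tilde\H_{\min}} = P W_K W_T^{(i)}|_{\tilde\H_{\min}} = W_K P W_T^{(i)}|_{\tilde\H_{\min}} = \tilde V \tilde T^{(i)}.
\]

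Finally, by the uniqueness of the minimal isometric co-extension there is a unitary $U : \tilde\H_{\min} \to \H_{\min}$ with $U|_\H = \id_\H$ and $U \tilde V = V_{\min} U$. Setting $S'^{(i)} := U \tilde S^{(i)} U^*$ and $T'^{(i)} := U \tilde T^{(i)} U^*$ on $\H_{\min}$ and then extending by zero on $\H' \ominus \H_{\min}$ produces the required row contractions. The one step that needs care is the observation that the minimal subspace $\tilde\H_{\min}$ is \emph{reducing} (not merely invariant) for $W_K$, which is precisely what makes $P$ commute with $W_K$ and thereby preserves the intertwining relation upon compression; once that is in hand, the rest of the argument is a routine universality chase.
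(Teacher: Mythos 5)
Your proof is correct and takes essentially the same route as the paper's: reduce to the minimal isometric co-extension of $K$, apply Solel's theorem to get intertwining isometric co-extensions $W_S, W_T, W_K$, compress to the subspace $\bigvee_{j\ge 0} W_K^j\H$ (which reduces $W_K$), and identify the restriction of $W_K$ there with the minimal dilation by uniqueness. You merely make explicit the details the paper leaves implicit, in particular that the reducing property of the minimal subspace rests on $V$ being a \emph{co-extension} of $K$ (which is how the lemma is actually applied, via the Schaeffer dilation) and that reducibility is what lets the compression preserve the intertwining relations.
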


\begin{proof}
Let $V_K$ be the minimal isometric dilation of $K$.
Then we can decompose $V \simeq V_K \oplus V'$.
So if we can dilate $S$ and $T$ to $S'$ and $T'$ intertwining $V_K$,
 then we can extend further to all of $\H'$ by adding zero summands to $S'$ and $T'$.

By Solel's result, $S, T$ and $K$ co-extend to intertwining isometries
$W_S, W_T$  and $W_K$, acting on a Hilbert space $\H^{\prime \prime}$.
Let $\H' = \bigvee_{j\ge0} W_K^j \H$ be the smallest invariant subspace
of $W_K$ containing $\H$.
Clearly, $\H'$ is reducing for $W_K$ and the restriction of $W_K$
on $\H'$ is (unitarily equivalent to) the minimal dilation $V_K$.
The result now follows by setting
\[
 S'^{(i)} =P_{\H'}S^{(i)}\mid_{\H'} \qand
 T'^{(i)} =P_{\H'}T^{(i)}\mid_{\H'}
 \qfor 1 \le i \le n .\qedhere
\]
\end{proof}

\begin{lem} \label{nextstep}
Let $\phi \in \Aut (\fA_n)$ and let $A=[A^{(1)}, \dots A^{(n)}]$ and $K$ be contractions
satisfying the covariance relations
$ A^{(i)}K=K\phi(A)^{(i)} $ for $1\leq i \leq n.$
Then there exist isometries $T_A= [T_{A}^{(1)}\ \dots\ T_{A}^{(n)}]$ and $T_K$,
dilating $A$ and $K$ respectively, so that
\[
 T_{A}^{(i)} T_K = T_K \phi(T_{A})^{(i)}  \qfor 1\leq i \leq n.
\]
\end{lem}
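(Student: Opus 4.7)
The plan is to dilate in three stages: first replace $A$ by a row isometry via Frazho--Bunce, then extend $K$ by commutant lifting to an intertwiner at the row-isometric level, and finally build a Sz.-Nagy-type isometric dilation of the lifted intertwiner on defect-space copies, together with a compatible row-isometric extension of $V_A$ that is twisted by the orbit of $\phi$.

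First, I apply the Frazho--Bunce theorem to dilate $A$ to a minimal row isometry $V_A = [V_A^{(1)}, \dots, V_A^{(n)}]$ on $\H_1 \supseteq \H$ that is a co-extension of $A$. The row isometry $V_A$ determines a unital $*$-representation $\pi$ of the Cuntz--Toeplitz algebra $\E_n$ with $\pi(L_i) = V_A^{(i)}$, and since $\phi$ extends to a $*$-automorphism of $\E_n$ (via the Voiculescu unitary, as recalled in the preliminaries), the row $\phi(V_A)^{(i)} := \pi(\phi(L_i))$ is another row isometry on $\H_1$. The co-extension property shows that $\phi(V_A)$ is in fact the (unique, up to unitary equivalence) minimal isometric dilation of $\phi(A)$. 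The Frazho--Popescu commutant lifting theorem, applied to $A$ and $\phi(A)$ intertwined by $K$ and with minimal isometric dilations identified with $V_A$ and $\phi(V_A)$, now yields a contraction $K_1$ on $\H_1$ that co-extends $K$ and satisfies $V_A^{(i)} K_1 = K_1 \phi(V_A)^{(i)}$ for $1 \le i \le n$.

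Next I construct a row isometry on the defect space $\D := \ol{D_{K_1}\H_1}$, where $D_{K_1} = (I - K_1^* K_1)^{1/2}$. Define $E^{(i)}$ on the dense subspace $D_{K_1}\H_1$ by
\[
  E^{(i)}\bigl(D_{K_1} h\bigr) \ :=\ D_{K_1}\, \phi(V_A)^{(i)} h, \qquad h\in\H_1.
\]
A direct computation, using only that $V_A, \phi(V_A)$ are row isometries together with the intertwining $V_A^{(i)} K_1 = K_1 \phi(V_A)^{(i)}$, yields
\[
  \bigl\langle E^{(i)} D_{K_1} h,\, E^{(j)} D_{K_1} g \bigr\rangle \ =\ \delta_{ij}\,\langle D_{K_1} h, D_{K_1} g\rangle,
\]
so $E = [E^{(1)}, \dots, E^{(n)}]$ is well defined and extends by continuity to a row isometry on $\D$.

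Finally, let $\H_2 := \H_1 \oplus \bigoplus_{k\ge 1} \D$, let $T_K$ be the Sz.-Nagy minimal isometric dilation of $K_1$ on $\H_2$, and set
\[
  T_A^{(i)} \ :=\ V_A^{(i)} \oplus E^{(i)} \oplus \phi(E)^{(i)} \oplus \phi^2(E)^{(i)} \oplus \cdots.
\]
Because $\phi$ is a $*$-automorphism of $\E_n$, each $\phi^k(E)$ is a row isometry on $\D$, so $T_A$ is a row isometry on $\H_2$; plainly $T_A$ dilates $A$ and $T_K$ dilates $K$. The required covariance $T_A^{(i)} T_K = T_K \phi(T_A)^{(i)}$ then splits coordinate-by-coordinate into three identities: on the $\H_1$-summand it is the given relation $V_A^{(i)} K_1 = K_1 \phi(V_A)^{(i)}$; on the first $\D$-summand it is the defining identity $E^{(i)} D_{K_1} = D_{K_1} \phi(V_A)^{(i)}$; and on every subsequent $\D$-summand it is the tautology $\phi^{k+1}(E) = \phi(\phi^k(E))$. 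The main obstacle, and the heart of the argument, is the well-definedness and row-isometric character of $E$ on the defect space; once that is in hand, the $\phi$-twisted direct-sum ansatz for $T_A$ makes the covariance on the shifting $\D$-summands automatic.
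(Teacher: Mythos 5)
Your proof is correct, and its second half takes a genuinely different, more economical route than the paper's. Both arguments begin the same way: Frahzo--Bunce plus Frazho--Popescu commutant lifting produce a pair $(V_A,K_1)$ with $V_A^{(i)}K_1=K_1\phi(V_A)^{(i)}$, and both then use the same $\phi$-twisted diagonal ansatz for the row operator on the enlarged space. The difference is how the operators on the new summands are obtained. The paper invokes Solel's dilation theorem (via Lemma~\ref{fromSolel}) applied to the Schaeffer dilation $S_{K_1}$ on $\H_1^{(\infty)}$; this produces corner operators $X_i$ satisfying $X_iD_{K_1}=D_{K_1}\phi(V_A)^{(i)}$ that are known only to form a row \emph{contraction}, so the resulting pair $(A_1,S_{K_1})$ is not yet isometric and the paper must iterate the two dilation steps alternately, $(A,K)\to(V_A,K_1)\to(A_1,S_{K_1})\to(V_{A_1},K_2)\to\cdots$, and pass to \sot-limits. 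You instead \emph{define} the corner operator on the defect space $\D=\ol{D_{K_1}\H_1}$ by $E^{(i)}D_{K_1}h=D_{K_1}\phi(V_A)^{(i)}h$ and observe that
\[
\bigl\langle E^{(i)}D_{K_1}h,\,E^{(j)}D_{K_1}g\bigr\rangle
=\delta_{ij}\bigl(\langle h,g\rangle-\langle K_1h,K_1g\rangle\bigr)
=\delta_{ij}\langle D_{K_1}h,D_{K_1}g\rangle,
\]
where the first equality uses that $\phi(V_A)$ is a row isometry, the intertwining $K_1\phi(V_A)^{(i)}=V_A^{(i)}K_1$, and that $V_A$ is a row isometry; this simultaneously gives well-definedness, isometry, and orthogonality of ranges, so $E$ is a row \emph{isometry}. (In fact your $E^{(i)}$ is exactly the restriction to $\D$ of the paper's $X^{(i)}_{11}$, since that operator maps $\ran D_{K_1}$ into $\D$ and satisfies the same relation; the paper simply never exploits that this restriction is isometric.) Consequently your construction terminates after a single application of the Sz.-Nagy minimal isometric dilation of $K_1$ on $\H_1\oplus\D\oplus\D\oplus\cdots$, with the covariance splitting coordinatewise into the commutant-lifting relation, the defining relation for $E$, and tautologies on the shifted defect summands---no Solel-type theorem, no infinite alternation, no \sot-limits. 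What the paper's route buys is that it avoids all explicit computation by quoting general dilation theorems; what yours buys is a shorter, self-contained, constructive proof whose only nontrivial ingredient beyond commutant lifting is the displayed identity. Both constructions yield co-extensions of $(A,K)$, so your version slots into the proof of Theorem~\ref{main1} exactly as the paper's does.
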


\begin{proof} Notice that if $V_A$ is the minimal isometric dilation of $A$,
then $\phi(V_A)$ is the minimal isometric dilation of
$\phi(A)= [\phi(A)^{(1})\ \dots\ \phi(A)^{(n)}]$.
Therefore, by applying commutant lifting to the covariance relations,
we obtain a contraction $K_1$ on a Hilbert space $\H_1$, satisfying
\[
 V_A^{(i)}K_1=K_1\phi(V_A)^{(i)}
\]

Let $S_{K_1}$ be the Schaeffer dilation of $K_1$ on $\H_1^{(\infty)}$ by
\[
 S_{K_1} \simeq
  \begin{bmatrix}
   K_1 &  0 &  0 & 0 &  \dots \\
   D_{K_1}  &0 &  0  & 0 &  \dots \\
   0  &  I  & 0 &  0 & \dots \\
   0  &  0  & I & 0  &  \dots   \\
   \vdots & \vdots & \vdots & \vdots  & \ddots
   \end{bmatrix}
\]
where $D_{K_1} = (I-K_1^{*}K_1)^{1/2}$.
We apply Lemma \ref{fromSolel}, with $S=V_A$, $T=\phi(V_A)$, $K=K_1$
and its isometric dilation $S_{K_1}$
to obtain row contractions $\hat{A_1}$ and $\hat{B_1}$, which
co-extend $V_A$ and $\phi(V_A)$, and satisfy
\[ 
 \hat{A}_1^{(i)} S_{K_1} = S_{K_1}\hat{B}_1^{(i)} \qfor 1 \le i \le  n.
\] 
Because $V_a^{(i)}$ are already isometries,
these dilations have the form
\[
 \hat{A}_1^{(i)} =
 \begin{bmatrix} V_A^{(i)} &  0  \\ 0  & [X^{(i)}_{jk}]_{j,k\ge1}  \end{bmatrix}
 \qand
 \hat{B}_1^{(i)} =
 \begin{bmatrix} \phi(V_A^{(i)}) &  0  \\ 0  & [Y^{(i)}_{jk}]_{j,k\ge1}  \end{bmatrix}
\]
By comparing $(2,1)$-entries in the covariance relation,  we obtain
\[
 X^{(i)}_{11} D_{K_1} = D_{K_1} \phi(V_A^{(i)}) \qfor 1\leq i \leq n .
\]

For simplicity, write $X_i = X^{(i)}_{11}$.
Note that $X=[X_1\ \dots, \ X_n]$ is a row contraction, and so $\phi(X)$ is meaningful.
For $1\leq i \leq n$, we now define
\[
A_{1}^{(i)}=
 \begin{bmatrix}
   V_A^{(i)} &  0 &  0 & 0 &  \dots \\
   0  & X_i &  0  & 0 &  \dots \\
   0  &  0  & \phi(X)^{(i)} &  0 & \dots \\
   0  &  0  &0     & (\phi \circ \phi)(X)^{(i)}  &  \dots   \\
   \vdots & \vdots & \vdots & \vdots  & \ddots
   \end{bmatrix}
\]
We obtain a row contraction $A_1=[A_{1}^{(1)}\ \dots\  A_{1}^{(n)} ]$
on a Hilbert space $\H_2$ so that
\[
   A_{1}^{(i)} S_{K_1} = S_{K_1} \phi(A_1)^{(i)}  \qfor 1\leq i \leq n.
\]

Continuing in this fashion, we obtain a sequence
\[
     (A, K), \,(V_A, K_1),\, (A_1, S_{K_1}), \,(V_{A_1}, K_2), \, ( A_2 , S_{K_2}) \dots
\]
of pairs of operators acting on Hilbert spaces
$
  \H \subseteq \H_1 \subseteq \H_2 \dots,
$
co-extending $A$ and $K$ and satisfying the covariance relations.
Let $\H = \bigvee_j \H_j$, and consider these pairs of operators as
acting on $\H$ by extending them to be zero on the complement.
Let
\[
 T_A = \sotlim A_j = \sotlim V_{A_j}
\]
and
\[
 T_K = \sotlim S_{K_j} = \sotlim K_j .
\]
These limits evidently exist as in each case, one of the sequences consists of
isometries which decompose as infinite direct sums.
In particular, $T_A$ is a row isometry and $T_K$ is an isometry.
Multiplication is \sot-continuous on the ball,
hence the covariance relations hold in the limit.
\end{proof}

We now extend this to a unitary representation.
The proof uses the ``one step extension'' technique.

\begin{lem} \label{finalstep}
Let $\phi \in \Aut (\fA_n)$ and let $S=[S^{(1)}, \dots S^{(n)}]$ be a row isometry
and let $V$ be an isometry acting on a Hilbert space $\H$ and
satisfying the covariance relations
\[
  S^{(i)}V=V\phi(S)^{(i)} \qfor 1\leq i \leq n .
\]
Then there exist a row isometry $\tilde S = [\tilde S^{(1)}\ \dots\ \tilde S^{(n)}]$ and
an isometry $\tilde V$, acting on a Hilbert space $\tilde\H \supset\H$,
extending $S$ and $V$ respectively and satisfying
\begin{itemize}
 \item[(i)] $\tilde S^{(i)} \tilde V = \tilde V \phi(\tilde S)^{(i)} \qfor 1\leq i \leq n$
 \item[(ii)] $\tilde V (\tilde\H) = \H$.
\end{itemize}
\end{lem}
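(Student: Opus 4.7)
My plan is to realize the ``one-step'' dilation of $V$ in the simplest possible way. Set $\M := \H \ominus V(\H)$, choose a fresh Hilbert space $\N$ with a unitary $B:\N \to \M$, and let $\tilde\H = \H \oplus \N$. Define
\[
 \tilde V = \begin{bmatrix} V & B \\ 0 & 0 \end{bmatrix}
\]
in the block decomposition $\tilde\H = \H \oplus \N$. Then $\tilde V$ is an isometry extending $V$ (the subspace $\H$ is invariant and $\tilde V|_\H = V$) with range $V(\H)\oplus\M = \H$, giving (ii) immediately.

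The off-diagonal blocks of $\tilde S^{(i)}$ are then essentially \emph{forced} by the covariance relation. I take the upper-triangular form
\[
 \tilde S^{(i)} = \begin{bmatrix} S^{(i)} & X^{(i)} \\ 0 & Z^{(i)} \end{bmatrix}
\]
(so that $\H$ is invariant, as required for an extension), and let $\pi$ denote the $*$-representation of $\E_n$ determined by $\pi(L_i) = S^{(i)}$, which is available because $S$ is a row isometry. Writing the covariance identity $\tilde S^{(i)}\tilde V = \tilde V\,\phi(\tilde S)^{(i)}$ out in blocks and splitting $\H = V(\H) \oplus \M$ via the projection $P_\M = I-VV^*$, one is led to the explicit choices
\[
 X^{(i)} := V^*\pi(\phi^{-1}(L_i))\,B,\qquad Z^{(i)} := B^* P_\M \pi(\phi^{-1}(L_i))\,B.
\]

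Two things then need to be verified, and the delicate one is that $[\tilde S^{(1)}\ \cdots\ \tilde S^{(n)}]$ is a row isometry. This is where the hypothesis that $\phi$ is an isometric automorphism of $\fA_n$ enters decisively: by Voiculescu's result recalled in Section~2, $\phi$ extends to a $*$-automorphism of $\E_n$, so $\pi\circ\phi^{-1}$ is itself a $*$-representation of $\E_n$ on $\H$, and consequently $\pi(\phi^{-1}(L_i))^*\pi(\phi^{-1}(L_j)) = \delta_{ij}I$. Combining this with the elementary identities $V^*B = 0$ and $BB^* = P_\M$, a direct block computation yields $\tilde S^{(i)*}\tilde S^{(j)} = \delta_{ij}\,I_{\tilde\H}$: the $(1,2)$ entry $S^{(i)*}X^{(j)}$ vanishes after rewriting $S^{(i)*}V^* = V^*\pi(\phi^{-1}(L_i))^*$ (the adjoint of $\pi(\phi^{-1}(L_i))V = VS^{(i)}$, which itself is a restatement of the covariance), and the $(2,2)$ entry telescopes using $VV^* + P_\M = I$.

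The covariance check is then routine: by multiplicativity of the representation $\tilde\pi(L_i) = \tilde S^{(i)}$ of $\fA_n$ it suffices to check on generators, and on generators the identity reduces, in the $(1,2)$-block, to $\pi(\phi^{-1}(L_i))\,B = V\alpha(L_i) + B\gamma(L_i)$ with $\alpha(A) = V^*\pi(\phi^{-1}(A))B$ and $\gamma(A) = B^*P_\M\pi(\phi^{-1}(A))B$, which is immediate from $VV^* + P_\M = I$. The main obstacle is therefore the row-isometry verification: without $\phi$ extending to a $*$-automorphism of $\E_n$ the off-diagonal cancellations would not occur, and the output would be a row contraction rather than the required row isometry.
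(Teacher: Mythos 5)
Your proof is correct and is essentially the paper's own argument written out in block-matrix form: the paper conjugates by the unitary $U(x,y) = Vx + y$ from $\H \oplus \K$ onto $\H$, where $\K = (I-VV^*)\H$, setting $\tilde V = U^*VU$ and $\tilde S^{(i)} = U^*\phi^{-1}(S)^{(i)}U$, and under your identification $B:\N\to\M$ your operators are exactly these conjugates, with $X^{(i)} = V^*\pi(\phi^{-1}(L_i))B$ and $Z^{(i)} = B^*P_\M\pi(\phi^{-1}(L_i))B$ coinciding with the corners of $U^*\phi^{-1}(S)^{(i)}U$. The only difference is bookkeeping: the paper gets the row-isometry property of $\tilde S$ for free from unitary equivalence with $\phi^{-1}(S)$, which is a row isometry for precisely the reason you cite (Voiculescu's extension of $\phi$ to a $*$-automorphism of $\E_n$), whereas you verify it by a direct block computation.
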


\begin{proof}
Let $\K= (I-VV^*)\H$ and set $\H' = \H \oplus \K$.
Define a unitary operator $U \in B(\H \oplus \K, \H)$ by
\[
 U(x, y) = Vx+y \qfor x \in \H \AND y \in \K.
\]
Set
\[
  \tilde V = U^*VU  \qand
  \tilde S^{(i)}= U^*\phi^{-1}(S)^{(i)} U \qfor i=1, \dots , n.
\]
Notice that $U^*(x)= (Vx\,,(I-VV^*)x)$ and so
\[
 \tilde V(x,y) = (Vx+y,0) \qfor x \in \H \AND y \in \K.
\]
Therefore $\tilde V$ extends $V$ and maps $\H'$ onto $\H$.

To show that $\tilde S$ extends $S$, note that the covariance relations
imply that
\[
  \phi^{-1}(S)^{(i)}V = VS^{(i)}  \qfor 1\leq i \leq n .
\]
Hence, for any $x \in \H$ we have for $x \in \H$,
\begin{align*}
 \phi^{-1}(S)^{(i)}U(x,0)&=\phi^{-1}(S)^{(i)}Vx   
                            =VS^{(i)}x
                            =U (S^{(i)}x, 0) .
\end{align*}
Hence $\tilde S^{(i)}|_\H = S^{(i)}$.
Finally, this same calculation shows that
\[
 \tilde S^{(i)} \tilde V = U^*\phi^{-1}(S)^{(i)} V U
 = U^* VS^{(i)} U = \tilde V  \phi(\tilde S)^{(i)} .\qedhere
\]
\end{proof}

We can now complete the proof of the main result.

\begin{proof}[\textbf{\em Proof of Theorem~\ref{main1}}]
Let $A=[A^{(1)}, \dots A^{(n)}]$ and $K$ be contractions on a Hilbert space $\H$
satisfying
\[
A^{(i)}K=K\phi(A)^{(i)}, \quad 1\leq i \leq n.
\]
Using Lemma \ref{nextstep}, we dilate $A$ and $K$ to isometries
$S$ and $V$  satisfying $S^{(i)}U=U\phi(S)^{(i)}$ for $1\leq i \leq n$.
Making repeated use of Lemma \ref{finalstep}, we now produce a sequence
$\{(S_j,V_j)\}_{j=1}^{\infty}$ of extensions consisting of a row isometry
$S_j$ extending $S_{j-1}$ and an isometry $V_j$ extending $V_{j-1}$,
acting on an increasing sequence of Hilbert spaces $\H_j$, which
satisfy the covariance relations and have $V_j \H_j = \H_{j-1}$.
If we set $S = \sotlim S_j$ and $U = \sotlim V_j$, then condition
(ii) in Lemma \ref{finalstep} implies that $U$ is a unitary while (i)
shows that $S$ and $U$ satisfy the covariance relations.
\end{proof}

\section{Maximal Covariant Representations
and the $\ca$-envelope of $\fA_n \times_{\phi} \bZ^+$.} \label{S:full}

There is a question left open in Theorem~\ref{main1}, which is whether the
row isometry in the unitary dilation generates the Cuntz algebra or the
Cuntz--Toeplitz algebra.  It is not hard to see that in the former case, there
is no sensible way to dilate further.  But in the Cuntz--Toeplitz case, there
is a gap, since $\sum_{i=1}^n S_iS_i^* < I$, that may allow a proper dilation.
In fact this occurs, and in this section we will deal with this issue.

The Dritschel--McCullough proof \cite{DMc} of Hamana's Theorem \cite{Ham}
proving the existence of Arveson's C*-envelope \cite{Arv1} is based on the
notion of a \textit{maximal representation}.  This is a completely contractive
representation $\rho$ of an  operator algebra with the property that the only
(completely contractive) dilations have the form $\rho \oplus \sigma$.
They establish that every representation dilates to a maximal one, and
that maximal representations extend to $*$-representations of the
C*-envelope.  In this manner, they were able to establish the existence
of the C*-envelope without taking Hamana's route via the injective envelope.
The upshot for dilation theory is to focus attention on maximal dilations.

In our case, Theorem~\ref{main1} shows that the maximal dilations
must send the generators of $\fA_n$ to a row isometry $S$ and the operator
implementing the automorphism must be unitary.  In the case when
this representation is of Cuntz type, meaning that $SS^* = I$, it is evident
that this representation is maximal.  So we are left to deal with the other case.

We first show that the Wold decomposition of $S$ decomposes $U$ as well.
Recall that the Wold decomposition uniquely splits the
Hilbert space into $\H = \H_0 \oplus \H_1$
so that $S_i|_{\H_0} \simeq L_i^{(\alpha)}$ is pure,
and $T_i := S_i|_{\H_1}$ has Cuntz type.

\begin{lem}
 Suppose $S = [S_1\ \dots \ S_n]$ is a row isometry and $U$ is a unitary on a Hilbert
 space $\H$ satisfying the covariance relations $S_iU = U \phi(S_i)$ for $1 \le i \le n$.
 Then the Wold decomposition reduces $U$, thereby decomposing the representation
 of $\fA_n \times_\phi \bZ^+$ into a pure part and a Cuntz part.
\end{lem}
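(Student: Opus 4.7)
The plan is to identify the Wold Cuntz subspace $\H_1$ intrinsically in terms of the Cuntz--Toeplitz C*-algebra $\E_n$, and then exploit the fact that $\phi$ extends to an automorphism of $\E_n$ which must fix its unique proper closed ideal $\K(\Fock)$.

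First I would promote the data from $\fA_n$ to $\E_n$. Since the $S_i$ are isometries with pairwise orthogonal ranges, the universal property of $\E_n$ produces a unique $*$-representation $\tilde\pi \colon \E_n \to \B(\H)$ with $\tilde\pi(L_i) = S_i$. By Voiculescu's theorem, $\phi$ extends to an automorphism $\tilde\phi \in \Aut(\E_n)$ implemented by the Voiculescu unitary. The map $a \mapsto U^* \tilde\pi(a) U$ is then a $*$-representation of $\E_n$ which agrees with $\tilde\pi \circ \tilde\phi$ on the generators $L_i$ by the covariance hypothesis, and two $*$-representations agreeing on a generating set of a C*-algebra must coincide; hence
\[
 U^* \tilde\pi(a) U \,=\, \tilde\pi(\tilde\phi(a)) \qforal a \in \E_n .
\]

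Second I would recast the Wold decomposition in ideal-theoretic terms. The image $\tilde\pi(P_{\xi_\mt})$ of $P_{\xi_\mt} = I - \sum_i L_i L_i^*$ is precisely the projection onto the wandering subspace of $S$, and $P_{\xi_\mt}$ generates $\K(\Fock)$ as a two-sided ideal. Combined with the fact that $\sum_i S_iS_i^* = I$ holds on $\H_1$ but not on the pure part $\H_0 = \bigoplus_w S_w \W$, this yields the intrinsic identification
\[
 \H_1 \,=\, \{\, x \in \H : \tilde\pi(K) x = 0 \FORAL K \in \K(\Fock) \,\}.
\]

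Third, since $\K(\Fock)$ is the unique proper closed two-sided ideal of $\E_n$, the automorphism $\tilde\phi$ must fix it setwise. Combined with the intertwining identity above, this gives for any $x \in \H$:
\begin{align*}
Ux \in \H_1
 &\iff \tilde\pi(K)(Ux)=0 \FORAL K \in \K(\Fock) \\
 &\iff U \tilde\pi(\tilde\phi(K))x=0 \FORAL K \in \K(\Fock) \\
 &\iff \tilde\pi(K')x=0 \FORAL K' \in \tilde\phi(\K(\Fock)) = \K(\Fock) \\
 &\iff x \in \H_1 .
\end{align*}
Thus $U\H_1 = \H_1$, and by unitarity $U\H_0 = \H_0$. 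Since $\H_0$ and $\H_1$ are reducing for $S$ by construction of the Wold decomposition, the covariant representation of $\fA_n \times_\phi \bZ^+$ splits as a direct sum of its pure and Cuntz components.

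I expect the main obstacle to be the intrinsic identification of $\H_1$ with $\ker \tilde\pi|_{\K(\Fock)}$; once this representation-theoretic reformulation is in hand, the invariance of $\H_1$ under $U$ is a formal consequence of the fact that every automorphism of $\E_n$ preserves its unique nontrivial ideal.
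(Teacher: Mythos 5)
Your proposal is correct, and it takes a genuinely different route from the paper's proof. The paper stays entirely inside the nonself-adjoint framework and works with wandering subspaces: writing $M = \ran(I-\sum_i S_iS_i^*)$ and $N = \ran(I - \sum_i \phi(S_i)\phi(S_i)^*)$, it notes that $\phi$ acts coordinatewise on the Wold pieces (from $S_i \simeq L_i^{(\alpha)} \oplus T_i$ one gets $\phi(S_i) = \phi(L_i)^{(\alpha)} \oplus \phi(T_i)$, the first summand still pure, the second still of Cuntz type), so $S$ and $\phi(S)$ have the \emph{same} pure part $\H_0 = \ol{\sigma(\fA_n)M} = \ol{\sigma(\fA_n)N}$; the covariance relation then shows $U$ carries one wandering subspace onto the other, whence $U\H_0 = \ol{\sigma(\fA_n)\,UN} = \ol{\sigma(\fA_n)M} = \H_0$. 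You instead climb to the C*-level: promote $S$ to a $*$-representation $\tilde\pi$ of $\E_n$, upgrade the covariance relation to $U^*\tilde\pi(a)U = \tilde\pi(\tilde\phi(a))$ for all $a \in \E_n$ (your generating-set argument for this is valid), identify the Cuntz part as $\H_1 = \ker \tilde\pi|_{\K(\Fock)}$, and invoke invariance of $\K(\Fock)$ under $\tilde\phi$. What your approach buys is formality: once $\H_1$ is recognized as the common kernel of an ideal, its invariance under \emph{any} unitary implementing \emph{any} automorphism of $\E_n$ is automatic, and you never need to check that $\phi$ preserves purity or Cuntz type of the restricted pieces, which is the one step in the paper's argument requiring care. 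What it costs is the identification $\H_1 = \ker\tilde\pi|_{\K(\Fock)}$ itself, which is the tersest point of your write-up; it does hold, and amounts to the representation-theoretic form of the Wold decomposition ($\ol{\tilde\pi(\K(\Fock))\H}$ is exactly the pure part, since a nondegenerate representation of the compacts is a multiple of the identity representation and hence extends to a multiple of the Fock representation of $\E_n$), but a complete proof would spell this out. Two small remarks: you can obtain $\tilde\phi(\K(\Fock)) = \K(\Fock)$ even more cheaply, since $\tilde\phi = \ad U_\phi$ is spatially implemented on $\Fock$ and conjugation by a unitary preserves the compact operators; and strictly speaking $\K(\Fock)$ is the unique \emph{nonzero} proper closed ideal of $\E_n$.
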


\begin{proof}
Let $\sigma$ be the representation of $\fA_n \times_\phi \bZ^+$
with $\sigma(L_i) = S_i$ and $\sigma(\fu) = U$.
Let $M = \ran(I-SS^*) = \ran(I-\sum_{i=1}^n S_iS_i^*)$.
Then $\H_0 = \ol{ \sigma(\fA_n) M}$.
Now $[\phi(S_1)\ \dots\ \phi(S_n)]$ is also a row isometry, and
we let $N = \ran(I - \sum_{i=1}^n \phi(S_i) \phi(S_i)^*)$.
Since $S_i \simeq L_i^{(\alpha)} \oplus T_i$, we see that
$\phi(S_i) = \phi(L_i)^{(\alpha)} \oplus \phi(T_i)$.
Thus the Wold decomposition of $\phi(S)$ decomposes $\H$
in the same way as $S$.  Therefore $\ol{\sigma(\fA_n) N} = \H_0$.

Now we use the fact that $U$ implements $\phi$ to see that
$N=UM$ and so
\[ U \H_0 = U \ol{\sigma(\fA_n) M} = \ol{ \sigma(\fA_n) U M} = \ol{\sigma(\fA_n) N} = \H_0 .\]
Therefore $\H_0$ reduces $U$ as claimed.
\end{proof}

Next we show how $\phi$ is implemented on $\H_0$.

\begin{lem}
 Let $\phi \in \Aut(\fA_n)$ and let $U_\phi$ be the Voiculescu unitary on $\Fock$
 which implements $\phi$.  Then the only unitaries on $\B(\Fock^{(\alpha)}$
 which implement $\phi$ on $\fA_n^{(\alpha)}$ have the form $U_\phi\otimes W$.
\end{lem}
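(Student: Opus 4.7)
The plan is to show that any unitary $V$ on $\Fock^{(\alpha)}$ implementing $\phi$ on $\fA_n^{(\alpha)}$ differs from $U_\phi \otimes I$ by a unitary in the commutant of $\fA_n^{(\alpha)}$, and then to identify that commutant using the irreducibility of $\E_n$ on $\Fock$.

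Fix such a $V$, so that $L_i^{(\alpha)} V = V \phi(L_i)^{(\alpha)}$ for $1 \le i \le n$. Since $U_\phi$ implements $\phi$ on $\fA_n$, the identity $L_i U_\phi = U_\phi \phi(L_i)$ rearranges to $U_\phi^* L_i = \phi(L_i) U_\phi^*$, which after amplification gives $(U_\phi^* \otimes I) L_i^{(\alpha)} = \phi(L_i)^{(\alpha)} (U_\phi^* \otimes I)$. Setting $Z := V(U_\phi^* \otimes I)$ and composing the two covariance relations yields $L_i^{(\alpha)} Z = Z L_i^{(\alpha)}$ for each $i$. Because $Z$ is unitary, taking adjoints shows that $Z$ also commutes with each $L_i^{*(\alpha)}$, so $Z$ belongs to the commutant of the $\ca$-algebra $\ca(\{L_i^{(\alpha)}\}) = \E_n \otimes I_{\ell^2(\alpha)}$.

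Next I would identify this commutant. The Cuntz--Toeplitz algebra $\E_n$ contains the rank-one projection $I - \sum_{i=1}^n L_iL_i^*$ onto the vacuum vector $\xi_\mt$, hence contains $\K(\Fock)$ and acts irreducibly on $\Fock$. Therefore $\E_n' = \bC I$, and a routine matrix-entry computation (express $Z$ as an operator matrix indexed by an orthonormal basis of $\ell^2(\alpha)$ and observe that each entry commutes with $\E_n$, hence is scalar) delivers $(\E_n \otimes I_{\ell^2(\alpha)})' = I_{\Fock} \otimes \B(\ell^2(\alpha))$. Consequently $Z = I \otimes W$ for some operator $W$, which is unitary since $Z$ is, and therefore $V = Z(U_\phi \otimes I) = U_\phi \otimes W$, as claimed.

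There is no serious obstacle; the proof is a clean application of irreducibility. The only point worth highlighting is that even though we only assumed $V$ implements $\phi$ on the nonself-adjoint algebra $\fA_n^{(\alpha)}$, unitarity of $Z$ automatically forces it into the commutant of the full C*-algebra $\E_n \otimes I$, which is precisely what allows the irreducibility of $\E_n$ to finish the job in one stroke.
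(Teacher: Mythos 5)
Your proof is correct and takes essentially the same route as the paper: form the unitary $Z$ differing from $V$ by $U_\phi\otimes I$, show it commutes with the amplified Cuntz--Toeplitz algebra, and identify the commutant $(\E_n\otimes \bC I)' = \bC I\otimes \B(\ell^2(\alpha))$ using irreducibility of $\E_n$ on $\Fock$. The only cosmetic difference is that where the paper invokes Fuglede's theorem to pass from commuting with $\fA_n^{(\alpha)}$ to commuting with the full C*-algebra, you use the equivalent elementary observation that conjugation by a unitary is a $*$-map.
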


\begin{proof}
Clearly $U_\phi \otimes I_\alpha$ implements $\phi$.  If $V$ is another unitary
implementing $\phi$ on $\fA_n^{(\alpha)}$, then $(U_\phi^*\otimes I_\alpha)V$
commutes with $\fA_n^{(\alpha)}$.  By Fuglede's Theorem, it commutes with
$\ca(\fA_n^{(\alpha)})^{\prime\prime} = \B(\Fock) \otimes \bC I_\alpha$.  Therefore
it lies in $\ca(\fA_n^{(\alpha)})' = \bC I_\Fock \otimes \B(\H)$ where $\dim\H = \alpha$,
say $(U_\phi^*\otimes I_\alpha)V = I \otimes W$.
\end{proof}

Now $W$ is unitary, and so has a spectral resolution.
So essentially every pure representation $(\id^{(\alpha)},U)$ of
the covariance relations is a direct integral of the representations
$(\id,\lambda U_\phi)$ as $\lambda$ runs over the unit circle $\bT$.
Thus it suffices to show how to dilate $(\id,U_\phi)$ to a Cuntz type
unitary dilation.

To accomplish this, we need to consider the map $\hat\phi$ in $\Aut(\bB_n)$.
We refer to \cite[Chapter 2]{Rud} for details.  We distinguish two cases.
In the first case,  $\hat\phi$ has a fixed point inside $\bB_n$.
Because $\Aut(\bB_n)$ acts transitively on $\bB_n$,  $\hat\phi$ is
biholomorphically conjugate to a map which fixes $0$.  Such an
equivalence yields a completely isometric isomorphism of the
semi-crossed products.  So we may assume that  $\hat\phi(0) = 0$
without loss of generality.  But then  $\hat\phi$ is a unitary matrix $U_0\in\U(n)$,
$\phi$ is the gauge automorphism it induces, and
\[ U_\phi = \sum\strut_{i\ge0}^\oplus U_0^{\otimes i} .\]
In the second case,  $\hat\phi$ fixes one or two points on the unit sphere.
Again $\Aut(\bB_n)$ acts transitively on the sphere, so we may suppose
that $e_1 = (1,0,\dots,0)$ is a fixed point.
We will deal with these two cases separately.

In both cases, we will dilate to atomic representations of the Cuntz algebra.
These are $*$-representations in which the generators permute an orthonormal
basis up to scalar multiples.  These representations were defined and
classified in \cite{DP1}. In the first case, we use representations of inductive type.
Beginning with an infinite tail, i.e., an infinite word $\bx = i_1i_2\dots$ in the alphabet
$\{1,\dots,n\}$, define a sequence of Hilbert spaces $\H_k$, for $k \ge 0$, as follows.
Each $\H_k$ naturally identified with Fock space $\Fock$, and this determines
the action of $\Fn$ on $\H_k$ by the left regular representation, which
extends to a $*$-representation $\lambda_k$ of the Cuntz--Toeplitz algebra $\E_n$.
Imbed $\H_{k-1}$ into $\H_k$ by the isometry $V_k \xi^{k-1}_w = \xi^k_{wi_k}$,
where with basis $\{\xi^k_w : w \in \Fn\}$ is the standard basis for $\H_k$.
Effectively, $V_k$ is unitarily equivalent to $R_{i_k}$, the right multiplication
operator by the symbol $i_k$.  Since this lies in the commutant of the left
regular representation, it is evident that $V_k$ intertwines $\lambda_{k-1}$ and $\lambda_k$.
The inductive limit of these representations, denoted $\lambda_\bx$, on the
Hilbert space $\H_\bx = \lim \H_k$,
is a $*$-representation  of $\E_n$ onto the Cuntz algebra because in the limit,
the sum of the ranges of $\lambda_\bx(\fs_i)$ for $1 \le i \le n$ is the whole space.

\begin{thm} \label{T:Cuntz dilate 1}
Let $\phi\in\Aut(\fA_n)$ such that $\hat\phi$ has a fixed point in $\bB_n$.
Then $(\id,U_\phi)$ has a unitary dilation of Cuntz type.
\end{thm}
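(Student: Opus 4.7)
The plan is to realize the Cuntz dilation as an inductive-limit representation of $\O_n$ built from a unit eigenvector of $U_0$. Since $\hat\phi(0)=0$, the map $\phi$ is the gauge automorphism induced by $U_0\in U(n)$, and $U_\phi$ acts on $\Fock$ as $\bigoplus_{k\ge 0} U_0^{\otimes k}$ with respect to the length grading. Choose a unit eigenvector $v\in\bC^n$ with $U_0v=\lambda v$, $\lambda\in\bT$, and let $R_v=\sum_i v_i R_i$ denote right creation by $v$ on $\Fock$, which is an isometry commuting with every $L_i$.

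First I would form the Hilbert space $\H=\varinjlim_k \Fock$ as the inductive limit along the embeddings $V_k=R_v$, and let $\rho$ be the extension of $\id:\E_n\to \B(\Fock)$ to $\H$ induced by the commuting left creation operators. Because $R_v$ carries the vacuum $\xi_\mt$ to the length-one vector $v$, the defect projection $I-\sum_i L_iL_i^*$ vanishes after one application of $R_v$; hence $\sum_i \rho(L_i)\rho(L_i)^* = I$ on $\H$, and $\rho$ is a $*$-representation of $\O_n$ of Cuntz type. When $v$ is a standard basis vector $e_i$ this coincides with the inductive-type atomic representation with tail $iii\cdots$; for general $v$ it is the natural rank-one generalization.

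Next I would build the extending unitary. A direct computation on length-graded pieces gives $U_\phi R_v U_\phi^* = R_{U_0 v} = \lambda R_v$, equivalently $U_\phi R_v = \lambda R_v U_\phi$. Setting $W_k = \lambda^{-k} U_\phi$ on the $k$-th copy of $\Fock$, one verifies $W_k V_k = \lambda^{-k}U_\phi R_v = \lambda^{1-k} R_v U_\phi = V_k W_{k-1}$, so the $W_k$ glue to a unitary $W$ on $\H$ with $W|_{\H_0}=U_\phi$. The covariance relation $\rho(A)W=W\rho(\phi(A))$ reduces on each $\H_k$ to $A\cdot\lambda^{-k}U_\phi = \lambda^{-k}U_\phi\phi(A)$, which is just the covariance $AU_\phi = U_\phi\phi(A)$ on $\Fock$ rescaled by $\lambda^{-k}$. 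Since $\H_0$ is invariant under both $\rho$ and $W$, compression to $\H_0$ recovers $(\id,U_\phi)$, so $(\rho,W)$ is the desired Cuntz-type unitary dilation.

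The main technical point is identifying the correct twist $\lambda^{-k}$ on level $k$ that reconciles $U_\phi$ with the non-commuting right-creation embeddings; once the eigenvector $v$ is chosen and this twist is found, the remaining verifications are routine.
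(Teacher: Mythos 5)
Your gauge-case construction is correct, and it is essentially the same inductive-limit argument as the paper's: the paper diagonalizes $U_0$, fixes an infinite tail $\bx=i_1i_2\cdots$, embeds successive copies of $\Fock$ by the right creations $R_{i_k}$, and glues the scalar multiples $\ol{x_k(\mu)}\,U_\phi$ into a unitary on the limit space; your variant takes a constant tail given by a single unit eigenvector $v$ of $U_0$, embeds by $R_v$, and twists by $\lambda^{-k}$ at level $k$. Your verifications are sound: $\ran R_v$ is orthogonal to the vacuum, so the limit representation is of Cuntz type; the relation $U_\phi R_v=\lambda R_vU_\phi$ gives the compatibility $W_kV_k=V_kW_{k-1}$; and $\H_0$ is invariant for both $\rho$ and $W$, so the pair compresses to $(\id,U_\phi)$. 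Avoiding the diagonalization of $U_0$ is a mild simplification over the paper's version of this step.

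The genuine gap is your opening sentence, ``Since $\hat\phi(0)=0$.'' The hypothesis is only that $\hat\phi$ fixes \emph{some} point of $\bB_n$, and your argument truly needs the origin: only when $\hat\phi(0)=0$ is $\phi$ a gauge automorphism with $U_\phi=\bigoplus_{k\ge0}U_0^{\otimes k}$, which is exactly what makes the eigenvector relation $U_\phi R_v=\lambda R_vU_\phi$ available. If $\hat\phi$ fixes $z_0\ne 0$, the Voiculescu unitary $U_\phi$ is not the second quantization of any matrix, and since the theorem concerns the concrete pair $(\id,U_\phi)$, you cannot simply normalize the fixed point to $0$ ``without loss of generality'': conjugating $\phi$ to a gauge automorphism $\phi'$ replaces $U_\phi$ by $U_{\phi'}$ and changes which covariant pair is to be dilated. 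The paper devotes the final part of its proof to exactly this point: it chooses an automorphism $\alpha$ with $\phi'\circ\alpha=\alpha\circ\phi$, $\phi'$ gauge, takes the Cuntz dilation $(\sigma_\bx,V)$ furnished by the gauge case, and checks that the composed pair $(\sigma_\bx\alpha,V)$ satisfies the $\phi$-covariance relations (using also that the gauge-case construction handles arbitrary pure covariant pairs, not just $(\id,U_{\phi'})$). You need to add a transfer step of this kind to obtain the theorem as stated.
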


\begin{proof}
As noted before the proof, $\phi$ is biholomorphically conjugate to an automorphism
which fixes the origin, and hence is a gauge automorphism.  So we start by assuming
that $\phi$ has this form;
so $\phi$ is determined by the unitary $\hat\phi = U_0$ on $\spn\{\xi_i : 1 \le i \le n\}$.

Since unitary matrices are diagonalizable, the map $\hat\phi$ is biholomorphically
conjugate to a diagonal unitary.
Thus it suffices to assume that $U_0$ is diagonal,
say $U_0 \xi_i = \mu_i \xi_i$ for scalars $\mu_i \in \bT$.
Let us write $\mu=(\mu_1,\dots,\mu_n) \in \bT^n$.
It is easy to verify that $U_\phi$ is the diagonal operator $U_\phi \xi_w = w(\mu) \xi_w$.

Now let $\bx = i_1i_2\dots$ be any infinite tail, and consider the construction
indicated before this proof.  Set $x_k = i_1i_2\dots i_k$ for $k \ge 1$.
Define unitaries $V_k$ on $\H_k$ by $V_k \xi^k_w = \ol{x_k(\mu)} w(\mu) \xi^k_w$.
It is easy to see that since this is a scalar multiple of $U_\phi$, conjugation by
$V_k$ implements $\phi$ on $\lambda_k(\E_n)$.  Moreover the scalar $\ol{x_k(\mu)}$
is chosen so that $V_k|_{\H_{k-1}} = V_{k-1}$ for $k \ge 1$.  Thus the inductive limit
yields the representation $\sigma_\bx$ and a unitary operator $V$ on $\H_\bx$
implementing $\phi$.  Thus $(\sigma_\bx,V)$ is the desired dilation.

Note that the discussion prior to the theorem implies now that \textit{any} representation of
$\fA_n\times_{\phi}\bZ^+$ dilates to a Cuntz-type representation, provided that $\phi$ is a gauge automorphism.

In case of an arbitrary $\phi$, we want to prove the existence of a Cuntz-type dilation for
$(\id, U_{\phi})$.
As in the discussion prior to the theorem, there exists a biholomorphic automorphism
$\alpha$ and  a gauge automorphism $\phi'$ so that $\phi' \circ \alpha = \alpha \circ \phi$.
By the previous paragraph, $(\id,\phi')$ has a unitary Cuntz dilation $(\sigma_\bx,V)$.
We claim that $(\sigma_\bx\alpha,V)$ provides a unitary Cuntz dilation of $(\id,\phi)$.
It suffices to verify the covariance relations:
\[
 \sigma_\bx\alpha(A)W = W \sigma_\bx(\phi'(\alpha(A))) = W \sigma_\bx\alpha (\phi(A)) .\qedhere
\]
\end{proof}

For the second case, we use a special case of the ring representations \cite{DP1}.
Let $\H_j = \Fock$ with basis $\{\xi^j_w : w \in \Fn\}$ for $2 \le j \le n$.
Let $\H = \bC \zeta \oplus \sum_{j=2}^n \oplus  \H_j$.
Let $\sigma_1$ denote the representation determined by
\begin{gather*}
 \sigma_1(L_1) \zeta = \zeta,\qquad
 \sigma_1(L_j)\zeta = \xi^j_\mt \quad\FOR 2 \le j \le n, \\
 \sigma_1(L_i) \xi^j_w = \xi^j_{iw} \quad\FOR 1\le i \le n,\ 2 \le j \le n,\ w \in \Fn.
\end{gather*}
This is evidently a Cuntz representation.  Moreover, $\bC \zeta$ is coinvariant and
thus the compression to $\bC \zeta$ is a multiplicative functional $\psi$ such that
\[ \psi(A) = \ip{ \sigma_{1,\mu}(L_i)\zeta,\zeta} = \delta_{i1} = \hat L_i (e_1) .\]
Hence $\ip{ \sigma_1(A) \zeta,\zeta} = \hat A(e_1)$ for all $A \in \fA_n$.

\begin{thm} \label{T:Cuntz dilate 2}
Let $\phi\in\Aut(\fA_n)$ such that $\hat\phi$ has a fixed point on
the boundary of $\ol{\bB}_n$.
Then $(\id,U_\phi)$ has a unitary dilation of Cuntz type.
\end{thm}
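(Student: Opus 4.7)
My plan is to mirror the proof of Theorem~\ref{T:Cuntz dilate 1}, with ring-type atomic representations attached to the boundary fixed point $e_1$ playing the role of the inductive representations $\lambda_\bx$ used there. Since $\Aut(\bB_n)$ acts transitively on $\partial\bB_n$, I would first conjugate $\phi$ by a biholomorphism so that $\hat\phi(e_1)=e_1$; the pullback trick at the end of Theorem~\ref{T:Cuntz dilate 1} shows that a Cuntz dilation for the conjugated system yields one for the original.

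Then I would take $\H_k=\Fock$ with $\lambda_k=\id_{\E_n}$ and embedding $J_k\colon\H_{k-1}\to\H_k$ given by $J_k\xi^{k-1}_w=\xi^k_{w1}$, i.e.\ right multiplication by the letter $1$ corresponding to $e_1$.  This is the inductive construction preceding Theorem~\ref{T:Cuntz dilate 1}, applied with the constant tail $\bx=111\cdots$. The inductive limit $(\H_\bx,\lambda_\bx)$ is a Cuntz representation of $\O_n$: every basis vector $\xi^k_w$ embeds at the next level as $\xi^{k+1}_{w1}$, which lies in the range of some $L_j$, so $\sum_iL_iL_i^*=I$ in the limit.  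Moreover $\H_0=\Fock$ sits inside $\H_\bx$ as an invariant subspace for $\lambda_\bx(\fA_n)$ on which $\lambda_\bx$ restricts to $\id$.

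Next I would construct compatible unitaries.  By the lemma above characterizing the unitary implementers of $\phi$ (applied with $\alpha=1$), any unitary on $\H_k$ implementing $\phi$ on $\lambda_k(\E_n)$ must have the form $c_kU_\phi$ with $c_k\in\bT$.  For these to assemble into a unitary on $\H_\bx$, the compatibility $c_kU_\phi J_k=J_k\,c_{k-1}U_\phi$ must hold for each $k\ge 1$.  Since $J_k$ acts as $R_1$, this reduces to the single eigenvector identity
\[
U_\phi\,R_1\,U_\phi^*\;=\;\lambda R_1,\qquad \lambda=c_{k-1}/c_k\in\bT.
\]
Once this holds, choose $c_0=1$ and $c_k=\lambda^{-k}$; the inductive limit of $\{c_kU_\phi\}$ is a unitary $V$ on $\H_\bx$ implementing $\phi$ on $\lambda_\bx$ with $V|_{\H_0}=U_\phi$.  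The pair $(\lambda_\bx,V)$ is the desired Cuntz-type unitary dilation of $(\id,U_\phi)$.

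The crux is the eigenvector identity $U_\phi R_1 U_\phi^*=\lambda R_1$.  In the gauge case of Theorem~\ref{T:Cuntz dilate 1} this was trivial from the diagonal form of $U_\phi$, but for $\phi$ parabolic or hyperbolic, $U_\phi$ is a more subtle unitary from Voiculescu's representation of $U(n,1)$.  The identity should be extracted from the fixed-point condition $\hat\phi(e_1)=e_1$ via the duality between $\fA_n$ and its commutant $\fR_n$: fixing the character at $e_1$ dualises to $R_1$ being a joint eigenvector of $\ad U_\phi$ with unimodular eigenvalue.  This verification, requiring a direct computation with the explicit form of $U_\phi$ near the boundary fixed point, is where I expect the real work to lie; once it is in hand, the assembly of the inductive limit is formal.
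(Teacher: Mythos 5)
Your framework is set up correctly: the reduction to $\hat\phi(e_1)=e_1$, the tower $\H_k\simeq\Fock$ with embeddings $J_k\simeq R_1$, and the observation (via the lemma on implementing unitaries, with $\alpha=1$) that any unitary implementing $\phi$ on a single copy of $\Fock$ must be $c_kU_\phi$. But the identity you isolate as the crux,
\[
U_\phi R_1 U_\phi^{*} \;=\; \lambda R_1 \qquad (\lambda\in\bT),
\]
is not a computation waiting to be done: it is \emph{false} for every $\phi$ that is not a gauge automorphism, i.e.\ precisely for the parabolic and hyperbolic automorphisms that Theorem~\ref{T:Cuntz dilate 2} exists to handle. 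To see this, for $z\in\bB_n$ let $\nu_z\in\Fock$ be the unit joint eigenvector with $L_i^{*}\nu_z=\ol{z_i}\,\nu_z$, so that $A^{*}\nu_z=\ol{\hat A(z)}\,\nu_z$ for $A\in\fL_n$; since the Fourier coefficients of $\nu_z$ are invariant under word reversal, also $R_i^{*}\nu_z=\ol{z_i}\,\nu_z$. From $U_\phi A=\phi^{-1}(A)U_\phi$ one gets $A^{*}U_\phi^{*}\nu_z=\ol{\hat A(\hat\phi^{-1}(z))}\,U_\phi^{*}\nu_z$, and since joint eigenvectors are unique up to scalars, $U_\phi^{*}\nu_z$ is a unimodular multiple of $\nu_{\hat\phi^{-1}(z)}$. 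Sandwiching your identity between $\nu_z$'s therefore gives, for every $z\in\bB_n$,
\[
\lambda z_1=\ip{\lambda R_1\nu_z,\nu_z}
=\ip{R_1U_\phi^{*}\nu_z,\,U_\phi^{*}\nu_z}
=\ip{R_1\nu_{\hat\phi^{-1}(z)},\,\nu_{\hat\phi^{-1}(z)}}
=\big(\hat\phi^{-1}(z)\big)_1 .
\]
Restricting to the slice $z=\zeta e_1$, $\zeta\in\bD$, the remaining coordinates $g_j(\zeta)$ of $\hat\phi^{-1}(\zeta e_1)$ satisfy $\sum_{j\ge2}|g_j(\zeta)|^2\le 1-|\zeta|^2$, hence vanish identically by the maximum modulus principle; in particular $\hat\phi^{-1}(0)=0$, so $\phi$ is a gauge automorphism --- the case already settled by Theorem~\ref{T:Cuntz dilate 1}. (The one-variable picture predicts this: conjugating the shift by the unitary implementing a M\"obius map yields multiplication by $\phi^{-1}(z)$, a scalar multiple of $z$ only for rotations.) Consequently, since every implementing unitary on $\H_k$ is forced to be $c_kU_\phi$, your tower admits \emph{no} compatible sequence of implementing unitaries at all; the failure cannot be repaired by a cleverer choice of constants, only by replacing the embeddings themselves, and finding an intertwining isometry $J\in\fR_n$ with $U_\phi JU_\phi^{*}\in\bT J$ (a noncommutative inner eigenfunction for a parabolic or hyperbolic symbol) is a substantial problem in its own right, not a formal verification.

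This obstruction is why the paper's proof looks entirely different. Instead of a tower of single copies of $\Fock$, it uses the ring-type atomic representation $\sigma_1$ on $\bC\zeta\oplus\Fock^{(n-1)}$, where the boundary fixed point enters as the one-dimensional coinvariant piece: $\ip{\sigma_1(A)\zeta,\zeta}=\hat A(e_1)$. The hypothesis $\hat\phi(e_1)=e_1$ is used to show that $W=1\oplus U_\phi^{(n-1)}$ already conjugates $\sigma_1(L_1)$ to $\sigma_1(\phi(L_1))$, and the remaining generators are then corrected by a unitary $1\oplus(I_{\Fock}\otimes V)$ from the commutant of $\fA_n^{(n-1)}$. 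The resulting implementing unitary $1\oplus(U_\phi\otimes V^{*})$ genuinely mixes the $n-1$ Fock summands --- exactly the freedom your rigid tower forbids --- and restricting to the invariant subspace $\Fock\otimes\bC y$, for $y$ an eigenvector of $V$, recovers $(\id,U_\phi)$ up to a unimodular scalar that is absorbed into the dilation.
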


\begin{proof}
As in the previous proof, we may
suppose that $\hat\phi$ has $e_1$ as a fixed point.

First we show that $\phi$ is unitarily implemented on $(\sigma_1,\H)$.
Let $\psi(A) = \ip{\sigma_1(A) \zeta,\zeta} = \hat A(e_1)$.
Define a unitary $W = 1 \oplus U_\phi^{(n-1)}$, and consider $S_i = W^* \sigma_1(L_i)W$.
Then
\[ S_i|_{(\bC\zeta)^\perp} = \phi(L_i)^{(n-1)} \qfor 1 \le i \le n .\]
Also
\begin{align*}
 \ip{S_i \zeta,\zeta} &= \psi(L_i) = \delta_{i1}  
 = \hat L_i(e_1) = \hat L_i \hat\phi(e_1) = \widehat{\phi(L_i)}(e_1) .
\end{align*}
In particular,
\[
 \ip{S_1\zeta,\zeta} = 1 =
 \ip{\sigma_1(\phi(L_1))\zeta,\zeta}   .
\]
Since both $S_1$ and $\sigma_1(\phi(L_1)) = \phi(\sigma_1(L_1))$ are isometries,
we conclude that $S_1\zeta = \zeta = \sigma_1(\phi(L_1)) \zeta$.
Both agree with $\phi(L_1)^{(n-1)}$ on $(\bC \zeta)^\perp$, and
therefore
\[ S_1 = 1 \oplus \phi(L_1)^{(n-1)} = \sigma_1(\phi(L_1)) .\]

On the other hand,  $S_j\zeta$ is orthogonal to $\zeta$ for $2 \le j \le n$.
Because these are isometries with pairwise orthogonal ranges, $S_j\zeta$
is also orthogonal to
\[
 \Big( \sum_{i=1}^n \phi(L_i) \Fock \Big)^{(n-1)} =
 \Big( U_\phi^* \sum_{i=1}^n L_i U_\phi\Fock \Big)^{(n-1)} =
 \big( (\bC \nu)^\perp  \big)^{(n-1)}
\]
where $\nu = U_\phi^*\xi_\mt$.
Observe that exactly the same is true for the isometries $\sigma_1(\phi(L_j))$
because $\sigma_1(\phi(L_j))|_{(\bC\zeta)^\perp} = \phi(L_i)^{(n-1)}$ also.
Therefore there is a unitary $V$ on $( \bC \nu )^{(n-1)}$ so that
\[ VS_j \zeta = \sigma_1(\phi(L_j))\zeta \qfor 2 \le j \le n .\]
Considering $V$ as an operator on $\bC^{n-1}$, we define
$V' = I_{\Fock} \otimes V$ in the commutant of $\fA_n^{(n-1)}$
extending $V$ to all of $(\bC\zeta)^\perp$.
Define $W' = (1 \oplus V')^*W$.
Then
\[ W^{\prime *} \sigma_1(L_i) W' = \sigma_1(\phi(L_i)) \qfor 1 \le i \le n .\]

Pick a unit eigenvector $y \in \bC^{n-1}$ for the unitary matrix $V$,
say $Vy = \beta y$.
Then $\H = \Fock \otimes \bC y$ is an invariant subspace for
$\sigma_1(\fA_n)$ which is also invariant for $W'$, and $W'|_\H = \ol{\beta} U_\phi$.
Thus it is clear that $(\sigma_1,W')$ is a unitary dilation of $(\lambda,  \ol{\beta} U_\phi)$.
Thus $(\sigma_1, \beta W')$ is a unitary dilation of $(\lambda, U_\phi)$.
\end{proof}

\begin{rem}
Arveson \cite{Arv1} defines a boundary representation of an operator algebra $\A$
to be an irreducible $*$-representation $\pi$ of $\ca(\A)$ so that $\pi|_\A$ has a
unique completely positive extension to $\ca(\A)$.  These are just the maximal
representations of $\A$ which are irreducible \cite{Arv3}.
So it is of interest to know when we can obtain irreducible dilations.
In Theorem~\ref{T:Cuntz dilate 1}, the  representation $\lambda_\bx$ is already
irreducible provided that $\bx$ is not eventually periodic, and
the representation $\sigma_1$ is also irreducible \cite{DP1}.
So we obtain boundary representations.
\end{rem}

 An immediate consequence of these dilation theorems,
 Theorem~\ref{main1} together with
Theorems~\ref{T:Cuntz dilate 1} and \ref{T:Cuntz dilate 2},
are the following crucial facts.

\begin{cor} \label{C:Cuntz diln}
Let $\phi \in \Aut(\fA_n)$.  Then every row contractive covariant representation
has a unitary dilation of Cuntz type.  Conversely, every covariant pair $(\sigma,U)$,
where $\sigma$ is a $*$-extendible representation of $\fA_n$ such that
$\sigma(L_1), \dots, \sigma(L_n)$ generate a copy of $\O_n$ and $U$ is
a unitary satisfying the covariance relations $\sigma(A) U = U \sigma(\phi(A))$
for all $A \in \fA_n$ determines a maximal representation of $\fA_n \times_\phi \bZ^+$.
\end{cor}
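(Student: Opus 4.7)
The plan is to handle the two assertions of the corollary in turn. For the first (existence of a Cuntz dilation), given a row contractive covariant representation $(\pi, K)$ I would begin by invoking Theorem~\ref{main1} to dilate to a unitary covariant pair $(\sigma, U)$ of $(\E_n, \phi)$, with $\sigma(L_1), \dots, \sigma(L_n)$ a row isometry. The Wold decomposition lemma of this section gives a reducing splitting $\H = \H_0 \oplus \H_1$ where the Cuntz summand $\H_1$ is already of the desired type, while on the pure summand $\H_0 \cong \Fock^{(\alpha)}$ we have $\sigma|_{\H_0} \simeq \id^{(\alpha)}$. The subsequent lemma identifies $U|_{\H_0} = U_\phi \otimes W$ for some unitary $W$ on $\bC^\alpha$.

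Next, I would invoke Theorem~\ref{T:Cuntz dilate 1} or~\ref{T:Cuntz dilate 2} (according to whether $\hat\phi$ has a fixed point inside $\bB_n$ or on its boundary) to obtain a Cuntz-type extension $(\sigma_\bx, V)$ of $(\id, U_\phi)$; in both constructions $\Fock \subseteq \H_\bx$ is invariant under both $\sigma_\bx(\fA_n)$ and $V$, with restrictions $\id$ and $U_\phi$. Then $(\sigma_\bx \otimes I_{\bC^\alpha},\, V \otimes W)$ is Cuntz type (the Cuntz relations survive tensoring), unitary in the second coordinate, satisfies the covariance relations by direct check, and dilates the pure part since $\Fock \otimes \bC^\alpha$ is invariant for it with the correct restrictions. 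Combining this with the Cuntz summand yields the desired unitary Cuntz-type dilation of $(\pi, K)$.

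For the second assertion (maximality), let $\tilde\sigma$ be any completely contractive dilation of $\pi := \sigma \times U$ on $\tilde\H \supset \H$, with $\tilde K := \tilde\sigma(\fu)$ and $\tilde\pi := \tilde\sigma|_{\fA_n}$. For $x \in \H$, combining $\|\tilde K x\| \le \|x\|$ with Pythagoras and $\|P_\H \tilde K x\| = \|Ux\| = \|x\|$ forces $(I-P_\H)\tilde K x = 0$, so $\tilde K(\H) \subseteq \H$; the same reasoning applied to $\tilde K^*$ and $U^*$ shows $\H$ reduces $\tilde K$. Since $\sigma$ extends to a $*$-representation of $\O_n$, the row $[\sigma(L_1), \dots, \sigma(L_n)]: \H^n \to \H$ is a unitary operator, and the identical Pythagorean argument applied to the contractive row $[\tilde\pi(L_1), \dots, \tilde\pi(L_n)]$ and its adjoint shows $\H$ reduces each $\tilde\pi(L_i)$. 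Therefore $\tilde\sigma \simeq \pi \oplus \rho$ for some $\rho$, proving maximality.

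The principal technical subtlety lies in the first assertion: one must verify that Theorems~\ref{T:Cuntz dilate 1} and~\ref{T:Cuntz dilate 2} really provide \emph{extensions} (not merely dilations) of $(\id, U_\phi)$, so that the tensor product construction cleanly yields a dilation of $(\id^{(\alpha)}, U_\phi \otimes W)$. This is readily checked from the explicit constructions of the $V_k$ and the inductive limits. Once established, the maximality in the second assertion is a routine consequence of the unitarity of $U$ and the Cuntz structure of $\sigma(L_i)$.
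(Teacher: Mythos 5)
Your proposal is correct, and its skeleton is exactly the paper's: Theorem~\ref{main1}, then the Wold-decomposition lemma, then the lemma identifying the implementing unitary on the pure summand as $U_\phi \otimes W$, then the two Cuntz-dilation theorems, and finally the standard rigidity argument for maximality. The one place you genuinely diverge is in how the multiplicity is handled. The paper disposes of the unitary $W$ by taking its spectral resolution and viewing $(\id^{(\alpha)}, U_\phi\otimes W)$ as a direct integral of the pairs $(\id,\lambda U_\phi)$, $\lambda\in\bT$, so that it "suffices" to dilate $(\id,U_\phi)$ (scalar twists of the dilating unitary being harmless). You instead tensor the extension $(\sigma_\bx,V)$ of $(\id,U_\phi)$ produced by Theorem~\ref{T:Cuntz dilate 1} or~\ref{T:Cuntz dilate 2} with $(I_{\bC^\alpha},W)$. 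Your version is arguably cleaner: it avoids direct-integral machinery entirely, works for arbitrary multiplicity $\alpha$ and arbitrary $W$ in one stroke, and rests only on the observation --- which you correctly flag as the key technical point, and which both constructions do provide --- that those theorems yield \emph{extensions}, with the embedded copy of $\Fock$ invariant for the representation and reducing for the dilating unitary, so that the tensored pair restricts correctly and covariance survives. Your treatment of the converse is also more detailed than the paper's, which simply declares Cuntz-type unitary covariant pairs "evidently" maximal; your Pythagorean argument (invariance of $\H$ from the isometric compressions, co-invariance from applying the same estimate to $\tilde K^*$ and to the column of adjoints $\tilde\pi(L_i)^*$ using $\sum_i \sigma(L_i)\sigma(L_i)^* = I$) is precisely the standard justification, and it makes visible why the Cuntz hypothesis, as opposed to merely Cuntz--Toeplitz, is essential for maximality.
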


\begin{cor} \label{theenvelope}
 $\cenv(\fA_n \times_\phi \bZ^+) = \O_n\times_\phi \bZ$.
\end{cor}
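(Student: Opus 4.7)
My plan is to apply the Dritschel--McCullough characterization of the C*-envelope \cite{DMc}, with Corollary~\ref{C:Cuntz diln} supplying the substantive input. First I would build the canonical map
\[
  \iota\colon \fA_n \times_\phi \bZ^+ \longrightarrow \O_n \times_\phi \bZ.
\]
Inside $\O_n \times_\phi \bZ$, the inclusion $\fA_n \subset \O_n$ is completely isometric and the canonical unitary $u$ implementing $\phi$ is a contraction satisfying $au = u\phi(a)$ for all $a \in \fA_n$; the universal property in Definition~\ref{def:sem} then produces $\iota$ as a completely contractive homomorphism carrying generators to generators. Its image generates $\O_n \times_\phi \bZ$ as a C*-algebra, since $\fA_n$ generates $\O_n$ and $\iota(\fu) = u$.

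Next I would verify that $\iota$ is completely isometric. Given any contractive covariant representation $(\pi, K)$ of $(\fA_n, \phi)$, Corollary~\ref{C:Cuntz diln} dilates it to a covariant pair $(\sigma, U)$ with $\sigma(L_1), \dots, \sigma(L_n)$ generating a copy of $\O_n$ and $U$ unitary. The $*$-extension of $\sigma$ to $\O_n$ together with $U$ still satisfies the covariance relation on all of $\O_n$ (both sides being $*$-maps that agree on the generators of $\fA_n$), so the pair integrates to a $*$-representation $\tilde\sigma$ of $\O_n \times_\phi \bZ$ with $\sigma \times U = \tilde\sigma \circ \iota$. Hence for every $a \in \fA_n \times_\phi \bZ^+$,
\[
  \|(\pi \times K)(a)\| \leq \|(\sigma \times U)(a)\| = \|\tilde\sigma(\iota(a))\| \leq \|\iota(a)\|_{\O_n \times_\phi \bZ}.
\]
Taking the supremum over $(\pi, K)$, and repeating at each matrix level, gives $\|a\|_{\fA_n \times_\phi \bZ^+} \leq \|\iota(a)\|$, so $\iota$ is completely isometric.

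To identify the C*-envelope, fix any faithful $*$-representation $\tilde\pi$ of $\O_n \times_\phi \bZ$ and set $\rho = \tilde\pi \circ \iota$. The previous step shows $\rho$ is completely isometric, and the converse half of Corollary~\ref{C:Cuntz diln} says that the covariant pair underlying $\rho$ is a maximal representation of $\fA_n \times_\phi \bZ^+$ in the Dritschel--McCullough sense. Their theorem then identifies $\cenv(\fA_n \times_\phi \bZ^+)$ with $\ca(\rho(\fA_n \times_\phi \bZ^+)) = \tilde\pi(\O_n \times_\phi \bZ) = \O_n \times_\phi \bZ$. There is essentially no remaining obstacle at this stage: Theorem~\ref{main1} and Theorems~\ref{T:Cuntz dilate 1} and \ref{T:Cuntz dilate 2}, packaged as Corollary~\ref{C:Cuntz diln}, supply exactly the two ingredients used---a Cuntz-type dilation that makes $\iota$ completely isometric, and maximality of Cuntz-type covariant pairs that pins down the envelope.
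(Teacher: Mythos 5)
Your proposal is correct and follows essentially the route the paper intends: the corollary is stated as an immediate consequence of Corollary~\ref{C:Cuntz diln} via the Dritschel--McCullough framework, and you have simply written out the implicit steps (the canonical completely contractive map $\iota$, complete isometry via integrating Cuntz-type dilations to $*$-representations of $\O_n\times_\phi\bZ$, and maximality of Cuntz-type covariant pairs identifying the envelope). No gaps; this matches the paper's argument.
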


\section{Concrete representations for $\fA_n \times_{\phi} \bZ^+$.}

One of the motivations for the present paper was to provide concrete
faithful representations for $\fA_n \times_{\phi} \bZ^+$.
Corollary \ref{theenvelope} essentially reduces this to the (selfadjoint) problem
of finding faithful representations for $\O_n\times_\phi \bZ$. We know one construction
of a representation of $\O_n\times_\phi \bZ$.
Just take the canonical map onto $\ca(\E_n, U_\phi)/\fK$.
When $\O_n\times_\phi \bZ$ is simple, this is an isomorphism.
We show that this is the case when $\phi$ is aperiodic.

\begin{thm}
The only unitaries in $\O_n$ which conjugate $\fA_n$ into itself are scalars.
\end{thm}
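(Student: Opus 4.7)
Suppose $U \in \O_n$ is unitary and $U\fA_n U^* \subseteq \fA_n$; set $\phi := \ad U|_{\fA_n}$, a completely isometric homomorphism. My first step is to verify that $\phi$ is in fact an automorphism of $\fA_n$. The $n$-tuple $T_i := U\fs_iU^* \in \fA_n$ is a row of isometries in $\O_n$ with pairwise orthogonal ranges summing to $I$, so by the Cuntz correspondence between unitaries in $\O_n$ and unital $*$-endomorphisms of $\O_n$, the unitary $W := \sum_i T_i\fs_i^* \in \O_n$ defines a unital $*$-endomorphism $\alpha_W(\fs_i) = W\fs_i = T_i$ that coincides with $\ad U$ on generators, hence on all of $\O_n$. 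Since $\ad U$ is an automorphism, so is $\alpha_W$, and in particular $\phi$ is onto $\fA_n$. By the classification recalled in Section~\ref{S:prel}, $\phi$ corresponds to a conformal automorphism $\hat\phi \in \Aut(\bB_n)$.

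The heart of the argument is to show $\hat\phi = \id$. For each $z \in \partial\bB_n$, let $\omega_z$ be the Cuntz boundary state of $\O_n$ characterized by $\omega_z(\fs_w\fs_v^*) = z_w\overline{z_v}$; this pure state extends the character $\chi_z$ of $\fA_n$. A direct Wick-ordering calculation, using $\phi(A) \in \fA_n$ together with the density of $\operatorname{span}(\fA_n\cdot\fA_n^*)$ in $\O_n$, yields the equivariance
\[
  \omega_z \circ \ad U \;=\; \omega_{\hat\phi(z)}
  \qquad\text{for every } z \in \partial\bB_n.
\]
By Brouwer's fixed point theorem, $\hat\phi$ has a fixed point $z_0 \in \overline{\bB}_n$. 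When $z_0$ is interior, I would conjugate $\phi$ by an $\alpha \in \Aut(\fA_n)$ lifted from a Möbius transformation carrying $z_0$ to $0$; since $\alpha$ extends via Voiculescu to a $*$-automorphism $\tilde\alpha$ of $\O_n$, the conjugate $\alpha\phi\alpha^{-1}$ is still inner in $\O_n$, now implemented by $\tilde\alpha(U)$, and corresponds to a conformal map fixing the origin, i.e.\ a Bogolyubov automorphism $\fs_i \mapsto \sum_j V_{ji}\fs_j$ with $V \in U(n)$. The known outerness of non-trivial Bogolyubov automorphisms of $\O_n$ then forces $V = I$, hence $\hat\phi = \id$. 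In the parabolic and hyperbolic cases, where $\hat\phi$ has fixed points only on $\partial\bB_n$, I would argue in parallel, exploiting the above equivariance along the $\hat\phi$-orbits on the sphere to again conclude $\hat\phi = \id$.

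Once $\hat\phi = \id$, we have $\phi = \id_{\fA_n}$, so $U$ commutes with every element of $\fA_n$ inside $\O_n$. Because $\fA_n$ generates $\O_n$ as a $C^*$-algebra and $\O_n$ is simple with trivial center, $U$ must be a scalar multiple of the identity.

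The main obstacle I anticipate is the treatment of the parabolic and hyperbolic cases, where no interior fixed point is available to conjugate to the origin and the clean reduction to the Bogolyubov outerness result breaks down; there, rigidity of $\hat\phi$ must instead be extracted directly from the constraint that a \emph{single} unitary $U \in \O_n$ equivariantly permutes the entire family $\{\omega_z\}_{z\in\partial\bB_n}$ of Cuntz boundary states.
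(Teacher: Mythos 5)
Your proposal has a genuine gap, and it sits exactly where the difficulty of this theorem lies: the parabolic and hyperbolic cases are never proved. You acknowledge this yourself, offering only the hope that rigidity can be ``extracted'' from the equivariance $\omega_z \circ \ad U = \omega_{\hat\phi(z)}$. But that equivariance cannot do the job even in principle: the computation establishing it uses only that $\ad U$ restricts to an endomorphism of $\fA_n$, that $\omega_z(AB^*) = \chi_z(A)\ol{\chi_z(B)}$ for $A,B \in \fA_n$, and the density of $\spn(\fA_n\fA_n^*)$ in $\O_n$. Consequently \emph{every} automorphism of $\O_n$ leaving $\fA_n$ invariant satisfies the identical equivariance --- in particular the non-trivial Voiculescu automorphisms, which by the corollary immediately following this theorem are outer. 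So equivariant permutation of the Cuntz states is a property of the automorphism alone and is completely blind to whether it is inner; any successful argument in the boundary-fixed-point cases must exploit membership of $U$ in $\O_n$ in some other way (say, norm approximation of $U$ by polynomials in the $\fs_w\fs_v^*$ played against the dynamics of $\hat\phi$ on $\partial\bB_n$), and your proposal contains no such mechanism. Two further defects: (i) your surjectivity step is a non sequitur --- that $\alpha_W = \ad U$ is an automorphism of $\O_n$ says nothing about $U\fA_nU^* \supseteq \fA_n$, and proper completely isometric unital endomorphisms of $\fA_n$ do exist (e.g.\ $\fs_i \mapsto \fs_1\fs_i$); without surjectivity you cannot invoke the conformal classification, Cartan's theorem, or the fixed-point trichotomy at all; (ii) the ``known outerness of non-trivial Bogolyubov automorphisms'' must be imported from outside this paper (e.g.\ Matsumoto--Tomiyama), since within the paper that outerness is itself a corollary of the theorem you are proving --- as written, even your elliptic case is circular.

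For comparison, the paper's proof needs none of this machinery and no case analysis. For each $i$ it uses an atomic Cuntz representation $\sigma_i$ in which $\bC\zeta$ is the unique minimal invariant subspace for $\sigma_i(\fA_n^*)$; uniqueness forces $\sigma_i(U)$ to fix $\bC\zeta$, whence $U\fs_iU^* = h_i(\fs_i)$ for some $h_i \in \AD$ with $h_i(1)=1$. A second family $\sigma_{ij}$ applied to $(\fs_i+\fs_j)/\sqrt2$ gives $U\big((\fs_i+\fs_j)/\sqrt2\big)U^* = h_{ij}\big((\fs_i+\fs_j)/\sqrt2\big)$, and comparing the two expressions forces $h_i = h_j = h_{ij} = z$; thus $U$ commutes with every generator, so it is central in $\O_n$ and hence scalar. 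If you wish to salvage your route, the elliptic case can be repaired (prove surjectivity honestly and cite the outerness result externally), but the parabolic and hyperbolic cases require an entirely new idea --- which is precisely what the paper's representation-theoretic argument supplies, uniformly for all cases.
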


\begin{proof}
Suppose that $U$ is a unitary in $\O_n$ such that $U\fA_n U^* = \fA_n$.

Consider the atomic representation $\sigma_i$ on $\H = \bC \zeta \oplus \Fock^{(n-1)}$,
where the $\Fock^{(n-1)} = \bigoplus \{ \H_k : 1 \le k \ne i \le n \}$ and $\H_k$ has standard
basis $\xi^k_w$ for $w \in \Fn$.  We define
\[ \sigma_i(\fs_j) \zeta = \begin{cases} \zeta &\qif j=i\\ \xi^j_\mt &\qif j \ne i \end{cases} \]
and
\[ \sigma_i(\fs_j) \xi^k_w = \xi^k_{jw} \qfor k \ne i,\ w \in \Fn .\]
The significance of this representation is that $\bC \zeta$ is the unique minimal invariant
subspace for $\sigma_i(\fA_n^*)$.  Hence it must be fixed by $\sigma_i(U)$.
It follows that $\sigma_i(U\fs_iU^*) \zeta = \zeta$.  But it is immediately apparent that
the only elements of $\fA_n$ which take $\zeta$ to itself are of the form $h_i(\sigma_i(\fs_i))$
where $h_i\in \AD$ and $h_i(1)=1$.  Thus $U\fs_iU^* = h_i(\fs_i)$.

Likewise there are representations $\sigma_{ij}$ with a unique minimal minimal invariant
subspace for $\sigma_i(\fA_n^*)$ which is one dimensional $\bC \zeta$ satisfying
$\sigma_{ij}((\fs_i+\fs_j)/\sqrt2)\zeta=\zeta$.  The same argument shows that there is
an $h_{ij}\in\AD$ so that
$U \big((\fs_i+\fs_j)/ \sqrt2 \big)U^* = h_{ij}\big( (\fs_i+\fs_j)/\sqrt2 \big)$.
Therefore
\[ h_i(\fs_i) + h_j(\fs_j) = \sqrt2 h_{ij} \big((\fs_i+\fs_j)/\sqrt2 \big) .\]
It is easy to see from this that $h_i=h_j=h_{ij} = \lambda z$.
Since $h_i(1)=1$, we see that $\lambda = 1$.
Therefore $U$ lies in the centre of $\O_n$; whence $U$ is scalar.
\end{proof}

\begin{cor}
The non-trivial Voiculescu automorphisms of $\O_n$ are outer.
\end{cor}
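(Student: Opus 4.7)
The plan is to deduce this immediately from the preceding theorem by contrapositive. First I would recall from Section~\ref{S:prel} that each Voiculescu automorphism $\phi$ of $\O_n$ arises from an element of $\Aut(\bB_n)$ via a unitary $U_\phi$ on the Fock space $\Fock$ which implements $\phi$ on $\E_n$ and setwise fixes $\fA_n$. Since $U_\phi$ normalises the compacts $\K(\Fock)$, this descends to an automorphism of $\O_n = \E_n/\K(\Fock)$, still denoted $\phi$, which continues to satisfy $\phi(\fA_n)=\fA_n$ once $\fA_n$ is identified with its completely isometric image in $\O_n$.

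Next, I would suppose for contradiction that some non-trivial Voiculescu automorphism $\phi$ is inner, say $\phi = \ad V$ for a unitary $V \in \O_n$. Then
\[ V \fA_n V^* = \phi(\fA_n) = \fA_n, \]
so the preceding theorem forces $V$ to be a scalar. But any scalar unitary gives $\ad V = \id$, contradicting the non-triviality of $\phi$.

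The only routine point to verify along the way is that a non-identity element $\hat\phi \in \Aut(\bB_n)$ really does produce a non-identity automorphism of $\O_n$. This is immediate: if $\hat\phi \neq \id$, its action on the character space $\ol{\bB}_n$ of $\fA_n$ is nontrivial, hence so is its action on $\fA_n$, and hence so is its descent to $\O_n$, since the embedding $\fA_n \hookrightarrow \O_n$ is completely isometric. I do not anticipate any real obstacle; the substantive content lives in the preceding theorem, and this corollary is essentially a one-line consequence.
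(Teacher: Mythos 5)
Your proof is correct and is exactly the argument the paper intends: the corollary is stated as an immediate consequence of the preceding theorem, obtained precisely by your contrapositive argument (an inner $\phi = \ad V$ fixes $\fA_n$ setwise, so $V$ must be scalar, forcing $\phi = \id$). Your additional verification that a non-identity element of $\Aut(\bB_n)$ yields a non-identity automorphism of $\O_n$ is a harmless, correct refinement of the same approach.
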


Kishimoto \cite[Theorem~3.1]{Kish} showed that if $\fA$ is a simple C*-algebra
and $\alpha \in \Aut(\fA)$ such that $\alpha$ is \textit{aperiodic}, i.e.,
$\alpha^n$ is outer for all $n \ne 0$, then
$\fA \times_\alpha \bZ$ is simple.  Thus we obtain:

\begin{cor}   \label{final1}
If $\phi \in \Aut(\fA_n)$ is aperiodic, then $\O_n \times_\phi \bZ$ is simple,
and thus is isomorphic to $\ca(\E_n, U_\phi)/\fK$. Therefore the representation
$\pi_{\O_n}\times \hat{U}_\phi$ of Example \ref{basiceg} is a faithful representation of
$\fA_n \times_\phi \bZ$.
\end{cor}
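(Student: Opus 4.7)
The plan is to verify Kishimoto's hypotheses to obtain simplicity of $\O_n\times_\phi\bZ$, to identify this crossed product with $\ca(\E_n,U_\phi)/\fK$ via the universal property, and then to transfer faithfulness to $\fA_n\times_\phi\bZ^+$ by way of the C*-envelope identification in Corollary~\ref{theenvelope}.

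First I would establish simplicity of $\O_n\times_\phi\bZ$. Aperiodicity of $\phi$ on $\fA_n$ means $\phi^k\neq\id_{\fA_n}$ for every $k\neq 0$. Because the Voiculescu extension of $\phi^k$ to $\O_n$ restricts to $\phi^k$ on $\fA_n$, it is also nontrivial on $\O_n$. By the corollary immediately preceding the theorem, every nontrivial Voiculescu automorphism of $\O_n$ is outer, so $\phi^k$ is outer on $\O_n$ for every $k\neq 0$. Since $\O_n$ is simple and the $\bZ$-action generated by $\phi$ is pointwise outer, Kishimoto's theorem gives that $\O_n\times_\phi\bZ$ is simple.

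Next I would identify $\O_n\times_\phi\bZ$ with $\ca(\E_n,U_\phi)/\fK$. The covariant pair $(\pi_{\O_n},\hat U_\phi)$ of Example~\ref{basiceg} satisfies the covariance relations for $(\O_n,\phi)$, so by the universal property of the full crossed product it yields a $*$-homomorphism $\pi_{\O_n}\times\hat U_\phi:\O_n\times_\phi\bZ\to\ca(\E_n,U_\phi)/\fK$ whose image is the entire target. Since the domain is simple and the image is nontrivial, this map must be a $*$-isomorphism.

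Finally, by Corollary~\ref{theenvelope} the inclusion $\fA_n\times_\phi\bZ^+\hookrightarrow\O_n\times_\phi\bZ$ into the C*-envelope is completely isometric, so composing with the $*$-isomorphism above shows that $\pi_{\O_n}\times\hat U_\phi$, restricted to the copy of $\fA_n\times_\phi\bZ^+$, is completely isometric. The only substantive point is the pointwise outerness assertion underlying the first step, but the preceding corollary has already done that work; so the present statement is essentially a bookkeeping consequence of what comes before.
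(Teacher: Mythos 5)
Your proposal is correct and follows exactly the route the paper intends: outerness of all nonzero powers of $\phi$ via the preceding corollary, Kishimoto's theorem (with simplicity of $\O_n$) for simplicity of the crossed product, the universal property plus simplicity to identify $\O_n\times_\phi\bZ$ with $\ca(\E_n,U_\phi)/\fK$, and Corollary~\ref{theenvelope} to transfer complete isometry to $\fA_n\times_\phi\bZ^+$. The paper leaves all of this implicit (the corollary is stated with no written proof), and your write-up supplies precisely those missing routine details; the only point you leave tacit is that full and reduced crossed products coincide here since $\bZ$ is amenable, which is needed to combine Kishimoto's simplicity result (stated for reduced crossed products) with the universal property of the full crossed product.
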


We will now observe that the other representation of Example \ref{basiceg}, i.e., $\id \times U_\phi$
is also faithful for $\fA_n \times_\phi \bZ$, provided that $\phi$ is aperiodic. This is of course a  feature of the
non-selfadjoint theory, since $\id \times U_\phi$ comes from a representation of
$\E_n \times_{\phi} \bZ$.

\begin{cor}  \label{final2}
If $\phi \in \Aut(\fA_n)$ is aperiodic, then the representation
$\id \times U_\phi$ of Example \ref{basiceg} is a faithful representation of
$\fA_n \times_\phi \bZ$.
\end{cor}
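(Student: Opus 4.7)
The plan is to bootstrap from the faithful representation $\pi_{\O_n}\times\hat{U}_\phi$ of Corollary~\ref{final1} by observing that $\id\times U_\phi$ and $\pi_{\O_n}\times\hat{U}_\phi$ fit into a commuting triangle through the Calkin quotient. Indeed, $\id\times U_\phi$ takes values in $\ca(\E_n,U_\phi)\subseteq\B(\Fock)$, and according to Example~\ref{basiceg} the representation $\pi_{\O_n}\times\hat{U}_\phi$ is defined precisely as its composition with the quotient map $q\colon\ca(\E_n,U_\phi)\to\ca(\E_n,U_\phi)/\K(\Fock)$: the representation $\pi_{\O_n}$ is the image of $\id_{\E_n}$ and $\hat{U}_\phi$ is the image of $U_\phi$ under $q$.

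Once this factorisation is in place, the proof reduces to a one-line sandwich. For every $k$ and every $x\in M_k(\fA_n\times_\phi\bZ^+)$, one has
\[
\|x\|\;\geq\;\|(\id\times U_\phi)^{(k)}(x)\|\;\geq\;\|(\pi_{\O_n}\times\hat{U}_\phi)^{(k)}(x)\|\;=\;\|x\|,
\]
where the first inequality uses that $\id\times U_\phi$ is completely contractive, the second that $q$ is a contractive $*$-homomorphism, and the final equality is Corollary~\ref{final1}. All the inequalities therefore collapse to equalities, and $\id\times U_\phi$ is completely isometric on $\fA_n\times_\phi\bZ^+$.

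I do not anticipate a real obstacle, since aperiodicity of $\phi$ enters only through Corollary~\ref{final1}, and once that is available the conclusion is automatic. The one point worth verifying explicitly is that $\pi_{\O_n}\times\hat{U}_\phi$ is indeed the Calkin-algebra image of $\id\times U_\phi$, but this is built into the definition in Example~\ref{basiceg}. It is also worth highlighting, as the remark preceding the corollary does, that this phenomenon is genuinely non-selfadjoint: the representation $\id\times U_\phi$ extends to a representation of the $\ca$-crossed product $\E_n\times_\phi\bZ$, which is \emph{not} faithful (its $\ca$-envelope is $\O_n\times_\phi\bZ$ by Corollary~\ref{theenvelope}), yet the sandwich argument shows that the restriction to the non-selfadjoint subalgebra $\fA_n\times_\phi\bZ^+$ remains completely isometric.
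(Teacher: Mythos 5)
Your proposal is correct and is essentially identical to the paper's own proof: both factor $\pi_{\O_n}\times\hat{U}_\phi = q\circ(\id\times U_\phi)$ through the Calkin map and conclude from Corollary~\ref{final1} that $\id\times U_\phi$ must be (completely) isometric. Your version merely spells out the sandwich inequality at each matrix level, which the paper leaves implicit.
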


\begin{proof}
Consider the diagram
\[
\begin{CD}
\fA_n\times_{\phi}\bZ^+@> \id \times U_\phi>> \B(\Fock) @> q >> \B(\Fock)\slash \K(\Fock),
\end{CD}
\]
where $q$ denotes the Calkin map.  By Corollary \ref{final1}, the composition
$q\circ  (\id \times U_\phi)=\pi_{\O_n}\times \hat{U}_\phi$
is isometric, and therefore $\id \times U_\phi$ is isometric as well.
\end{proof}

When $\phi$ is periodic, it may be necessary to use a family of representations.
A natural choice are $(\id,\lambda U_\phi)$ for $\lambda \in \bT$.
Form $\H = \Fock \otimes L^2(\bT)$.
Consider $(\id^{(\infty)}, U_\phi \otimes M_z)$
where $\id^{(\infty)}(A) = A \otimes I$ and $M_z$ is multiplication by $z$ on $L^2(\bT)$.
Clearly this is a covariant representation.
Let $R_\mu$ denote the operator of rotation by $\mu \in \bT$ on $L^2(\bT)$.
Then $\ad I\otimes R_\mu$ fixes $\id^{(\infty)}(\E_n)$ and conjugates $M_z$ to $\mu M_z$.
Consequently integration with respect to $\mu$ yields a faithful expectation of
$\fB = \ca(\id^{(\infty)}(\E_n), U_\phi\otimes M_z)$ onto the copy $\id^{(\infty)}(\E_n)$ of $\E_n$.
A standard gauge invariant uniqueness argument shows that $\fB \simeq \E_n \times_\phi \bZ$.
Modding out by the ideal $\fJ$ generated by $\id^{(\infty)}(\fK)$ yields a covariant
representation of $\O_n \times_\phi \bZ$ which has a faithful expectation onto $\O_n$.
Thus this also yields a faithful representation of the crossed product.
To summarize, we have established that:

\begin{prop}
The crossed product $\O_n \times_\phi \bZ$ is isomorphic to $\fB/\fJ$, where
$\fB = \ca(\id^{(\infty)}(\E_n), U_\phi\otimes M_z)$ and
$\fJ$ is the ideal generated by $\id^{(\infty)}(\fK)$.
\end{prop}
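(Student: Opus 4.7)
The plan is to apply the standard gauge-invariant uniqueness theorem for $\bZ$-crossed products. The representation $(\id^{(\infty)},U_\phi\otimes M_z)$ on $\Fock\otimes L^2(\bT)$ is covariant for $(\E_n,\phi)$, so by universality it integrates to a surjective $*$-homomorphism $\pi:\E_n\times_\phi\bZ\to\fB$ which restricts to $\id^{(\infty)}$ on $\E_n$ and sends the implementing unitary $\fu$ to $U_\phi\otimes M_z$. It remains to show $\pi$ is injective.

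Next I would install compatible circle actions. The dual action $\gamma$ on $\E_n\times_\phi\bZ$ fixes $\E_n$ and sends $\fu\mapsto\mu\fu$, while the inner action $\beta_\mu:=\ad(I\otimes R_\mu)$ on $\fB$, with $R_\mu$ rotation by $\mu$ on $L^2(\bT)$, fixes $\id^{(\infty)}(\E_n)$ pointwise (since $I\otimes R_\mu$ commutes with $A\otimes I$) and satisfies $\beta_\mu(U_\phi\otimes M_z)=\mu(U_\phi\otimes M_z)$. Hence $\pi$ intertwines $\gamma$ and $\beta$. Averaging against Haar measure on $\bT$ yields conditional expectations $E_0:\E_n\times_\phi\bZ\to\E_n$ and $E:\fB\to\fB^\beta$ with $\pi\circ E_0=E\circ\pi$; the expectation $E_0$ is the standard faithful conditional expectation on a $\bZ$-crossed product, and $E$ is faithful because $\beta$ is spatially implemented by a unitary representation of the compact group $\bT$. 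The covariance relation lets every word in $\id^{(\infty)}(\E_n)$ and $U_\phi\otimes M_z$ be rewritten as a sum $\sum_k a_k(U_\phi\otimes M_z)^k$ with $a_k\in\id^{(\infty)}(\E_n)$, which identifies $\fB^\beta=\id^{(\infty)}(\E_n)$.

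Injectivity of $\pi$ then follows in the usual way: if $\pi(x)=0$ then $\pi(E_0(x^*x))=E(\pi(x^*x))=0$, and since $\id^{(\infty)}$ is faithful on $\E_n$ this forces $E_0(x^*x)=0$, whence $x=0$ by faithfulness of $E_0$. For the quotient statement, note that $\phi|_{\E_n}=\ad U_\phi$ preserves $\fK$, so the ideal $\J$ of $\E_n\times_\phi\bZ$ generated by $\fK$ is gauge-invariant with quotient canonically isomorphic to $\O_n\times_\phi\bZ$ (via $\E_n/\fK\cong\O_n$). Since $\pi(\J)=\fJ$, passing to the quotient yields $\fB/\fJ\cong\O_n\times_\phi\bZ$.

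The main obstacle is the identification $\fB^\beta=\id^{(\infty)}(\E_n)$, which amounts to checking that every $\beta$-invariant element of $\fB$ actually lies in the coefficient copy; this is the Fourier-coefficient / canonical-form step of the gauge argument. Once that is in hand, the remainder of the proof is a formal application of the standard gauge-invariant uniqueness machinery for $\bZ$-crossed products.
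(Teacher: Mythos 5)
Your proposal is correct and follows essentially the same route as the paper: the rotation unitaries $I\otimes R_\mu$ give a point-norm continuous circle action on $\fB$ fixing $\id^{(\infty)}(\E_n)$ and scaling $U_\phi\otimes M_z$, averaging yields a faithful conditional expectation onto the copy of $\E_n$, and the standard gauge-invariant uniqueness argument identifies $\fB\simeq\E_n\times_\phi\bZ$. The only cosmetic difference is the quotient step, where you invoke exactness of $\bZ$-crossed products, i.e.\ $(\E_n\times_\phi\bZ)/(\fK\times_\phi\bZ)\cong\O_n\times_\phi\bZ$ with $\fK\times_\phi\bZ$ the ideal generated by $\fK$, while the paper re-runs the gauge argument downstairs, noting that $\fB/\fJ$ carries a covariant representation of $(\O_n,\phi)$ with a faithful expectation onto $\O_n$; both are standard and equally valid.
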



\end{document}